\newtheorem{theorem}{Theorem}[section]
\newtheorem{lemma}{Lemma}[section]
\newtheorem{remark}{Remark}[section]
\newtheorem{definition}{Definition}[section]
\def \oet{\Omega^{\varepsilon T}}			%  Omega\varepsilon T
\def \get{\Gamma_G^{\varepsilon T}}               %\varepsilon dependent GammaT
\def \gee{\Gamma_G^{\varepsilon}}               %\varepsilon dependent Gamma
\def \ep{\varepsilon}                           %short form for \varepsilon
\numberwithin{equation}{section}
\title{Homogenization of a pore scale  model for precipitation and dissolution in porous media}
\author{K. Kumar$^1$ , M. Neuss-Radu$^2$ , I. S. Pop$^{3}$  \\
{$^1$ Center for Subsurface Modeling, ICES, UT Austin, Texas, U.S.A.} \\
{$^2$ Department Mathematik, Universit\"at Erlangen-N\"urnberg, Erlangen, Germany}\\
{$^3$ Dept. of Math. and Comp. Sci., Eindhoven University of Technology, The Netherlands}\\
%{$^4$ Dept. of Math., University of Bergen, Norway}\\
{\emph{kkumar@ices.utexas.edu, maria.neuss-radu@math.fau.de, i.pop@tue.nl}}
}
\date{}
\begin{document}
\maketitle
\begin{abstract}

In this paper we employ homogenization techniques to provide a rigorous derivation of the Darcy scale model for precipitation and dissolution in porous media proposed in \cite{knabner}. The starting point is the pore scale model in \cite{porescale}, which is a coupled system of evolution equations, involving a parabolic equation and an ordinary differential equation. The former models ion transport and is defined in a periodically perforated medium. It is further coupled through the boundary conditions to the latter, defined on the boundaries of the perforations and modelling the dissolution and precipitation of the precipitate.

The main challenge is in dealing with the dissolution and precipitation rates, which involve a monotone but multi-valued mapping. Due to this, the micro-scale solution lacks regularity. With $\ep$ being the scale parameter (the ratio between the micro scale and the macro scale length), we adopt the 2-scale framework to achieve the convergence of the homogenization procedure as $\ep$ approaches zero.
\end{abstract}

\section{Introduction}
In this paper, we employ rigorous homogenization \index{Homogenization!Periodic} techniques to derive the effective (Darcy scale) model for dissolution and precipitation in a complex (porous) medium proposed in \cite{knabner}. The starting point is the micro (pore) scale model analyzed in \cite{porescale, tycho3}, where the existence and uniqueness of a solution are proved. The particularity is in the dissolution and precipitation, involving multivalued rates. Using homogenization techniques, here we give a rigorous derivation of the macro (core) scale counterpart. For the resulting upscaled model existence and uniqueness is obtained.

At the micro scale, the medium consists of periodically repeating solid grains surrounded by voids (the pores). The pore space forms a periodically perforated domain (the grains being the perforations in the domain) which is completely filled by a fluid (e.g. water). The fluid is flowing around the solid grains, transporting solutes, which are dissolved ions. The solute may further diffuse in the fluid, whereas at the the grain surfaces (the boundaries of the perforations), the solute species may react and precipitate, forming a thin layer of an immobile species (salt) attached to these boundaries. The reverse process of dissolution is also possible.

One important assumption is that the layer of the species attached to the grain boundaries (the precipitate) is very thin when compared to the pore thickness, so eventual changes in the the geometry at the pore-scale can be neglected. This allows decoupling the equations modelling the flow from those describing the chemical processes. This assumption is justified whenever the density of the deposited layer is very large when compared to the typical density  of the solute (see \cite{kumar, tycho2, tycho, tycho-sorin-stefan-problem}). These papers consider the alternative approach, where the precipitate layer induces non-negligible changes in the pores, leading to a model involving free boundaries at the micro scale.

Encountered at the boundary of the perforations, the precipitation process is modeled by a rate function that is monotone and Lipschitz continuous with respect to the solute concentrations. This is consistent with the mass action kinetics. For the dissolution, at sites on the grain boundary where precipitate is present, it will be dissolved at a constant rate. A special situation is encountered when no precipitate is present at one site, when certainly no dissolution is possible. {Besides, at such a location} a precipitate layer (meaning an effective occurrence of the immobile species) is only possible if the fluid is "oversaturated". This means that the precipitation rate exceeds a threshold value, the so-called solubility product.
In the "undersaturated" regime, when the precipitation rate is below the solubility product, no effective gain in the precipitate is possible. This can be seen as an instantaneous dissolution of any precipitate formed in undersaturated conditions, so the overall result of these processes encountered at the time scale of interest is null. {In other words, the precipitation rate is in balance with the dissolution rate.} Between oversaturation and undersaturation, the precipitation rate equals the solubility product, which is an {equilibrium value}. In this case neither precipitation, nor dissolution is encountered.

Note that the undersaturated regime is encountered for any value of the precipitation rate that is below the solubility product. Since the overall rate is zero, at sites where no precipitate is present, the dissolution rate should take a value between zero (no dissolution) and the equilibrium one (the solubility product), in order to balance the dissolution rate. To model this situation, we define the dissolution rate as a member of a multi-valued graph \index{Heaviside graph}(a scaled Heaviside graph).
% At the micro (pore) scale the above processes were investigated in \cite{porescale}. Using homogenization techniques, here we give a rigorous derivation of the macro (core) scale counterpart. For the resulting upscaled model existence and uniqueness is obtained. \\
The macro scale model for the present problem has been proposed in \cite{knabner} and further discussed in \cite{hans_peter1,hans_peter2,hans_peter3}, where the main focus is related to travelling waves. Its pore scale counterpart has been analyzed in \cite{porescale} and \cite{tycho3}, where existence and uniqueness results are obtained. Furthermore, in \cite{porescale} a two dimensional strip was considered as a model geometry for deriving rigorously the macro scale model by a transversal averaging procedure.

Still for a simplified geometry, but for the case when free boundaries are encountered at the pore scale due to dissolution and precipitation, upscaled models are derived formally in \cite{tycho} for moderate {\it Pecl\'et} numbers. The same situation, but now under a dominated transport regime - high {\it Pecl\'et} numbers, is considered in \cite{kumar}. The upscaled model is similar to Taylor dispersion, but now includes the effect of the changing geometry and of the reactions at the micro scale. Similar models are also obtained in \cite{tycho-sorin-rainer} for biofilm growth, in \cite{Ray} for drug release from collagen matrices and in \cite{Peter} for a reactive flow model involving an evolving microstructure.

The dissolution and precipitation model under discussion here was considered also in domains with rough boundaries. Assuming that the precipitate does not affect the domain, effective boundary conditions are derived rigorously in \cite{KvHPop}. Similar results, but for the alternative approach involving free and rough boundaries are obtained formally in \cite{KvNPop}.

Strictly related to the dissolution and precipitation model discussed here, we recall that the convergence of numerical schemes is analyzed in \cite{devigne} for the micro scale model, and in \cite{kumar_fem,KPR_MFEM} for the macro scale model. We further refer to \cite{AdrianMaria}, analyzing a multiscale Galerkin approach to couple the micro scale and the macro scale variables. Though the results are for Lipschitz-type nonlinearities, this method can be adapted in this context too.

The above mentioned rigorous upscaling result was obtained in \cite{porescale} in a simplifed setting: a two-dimensional strip. In this case, a simple transversal averaging procedure can be applied. Here {we consider the more general situation, when the porous medium is modelled by a periodically perforated domain. Clearly, this requires a different upscaling approach. For the rigorous derivation of the macroscopic model,} we use the 2-scale convergence concept developed in \cite{Allaire,Ng89} and extended further in \cite{Maria} to include model components defined on lower dimensional manifolds (the grain boundaries). In the limit, the resulting upscaled model has the same structure as the model proposed in \cite{knabner}.

%\marginpar{Is \cite{RayMunteanKnabner} relevant here? I mean besides having them as friends, the paper has no reactions at the pore walls; we should then add a different paragraph on similar issues. I would rather replace it by the paper by Andro and Hans on Taylor dispersion, as this is closer to the other three.}
We mention \cite{ Jaeger,jaeger-mikelic} for pioneering works on rigorous homogenization of reactive flow models, including  the derivation of upscaled models from well-posed microscopic (pore-scale) models. Since then many publications have considered similar problems; we restrict here to mention papers that are very close to the present contribution. Non-Lipschitz but continuous reaction rates are considered \cite{Conca}, but for one species. In \cite{Maria} and \cite{anna} the two-scale convergence framework is extended for variables defined on lower-dimensional manifolds. Rigorous homogenization results for reactive flows including adsorption and desorption at the boundaries of the perforations, but in dominating flow regime (high Pecl{\' e}t numbers) are obtained in \cite{AMP10,AP10, AndrovanDuijn}. The two-scale convergence approach has been extended to include the mechanics of the porous media and finds application in several fields including the biological, mechanical etc. A recent work dealing with combining the reactive flow with the mechanics of cells is \cite{JMR11}. Of particular relevance to the present work is the work of \cite{anna} where non-linear reaction terms on the surface are treated using the techniques of periodic unfolding.

The major challenge in the present work is in dealing with the dissolution rates, which is a member of a multi-valued graph. For a proper interpretation of this rate, we first consider it as the limit of its regularized version. Following \cite{porescale}, this allows identifying the dissolution rate in a unique way. However, the resulting dissolution rate is non-Lipschitz and may even become discontinuous. This brings two difficulties in obtaining the rigorous results: compared to models involving Lipschitz continuous rates, the solution component defined in the perforated domain lacks regularity, and for the solution components defined on the boundary of the perforations
a proper convergence concept is required.

Specifically, for passing to the limit in the sequence of micro scale solutions one usually extends the solution components defined in the porous medium to the entire domain, including perforations. The convergence is then obtained by uniform energy estimates, which involve all (weak) derivatives. The estimates for the spatial derivatives are obtained here in the usual manner. The time derivative instead needs more attention. The common approach, similar to deriving the convection-diffusion-reaction equation with respect to time, does not work here due to the particular dissolution rate. Here we elaborate the ideas in \cite{KundanTh}, and show that the extension satisfies uniform estimates strictly in the space where the solution is defined, and not a better one. In the present context, this seems to be the optimal result.

For the components defined on the boundaries, we follow the ideas in \cite{maria-willi} and \cite{anna}, where the concept of strong two-scale convergence is introduced. This is based on unfolding/localization operators \cite{BLM96,CDG08}. In particular, for the immobile species (the precipitate) we  obtain compactness results of the unfolded sequence leading to the strong convergence. These results allow us to identify the limit of the (pore scale) dissolution rate.

\begin{figure}
\centering
\includegraphics[scale=0.5]{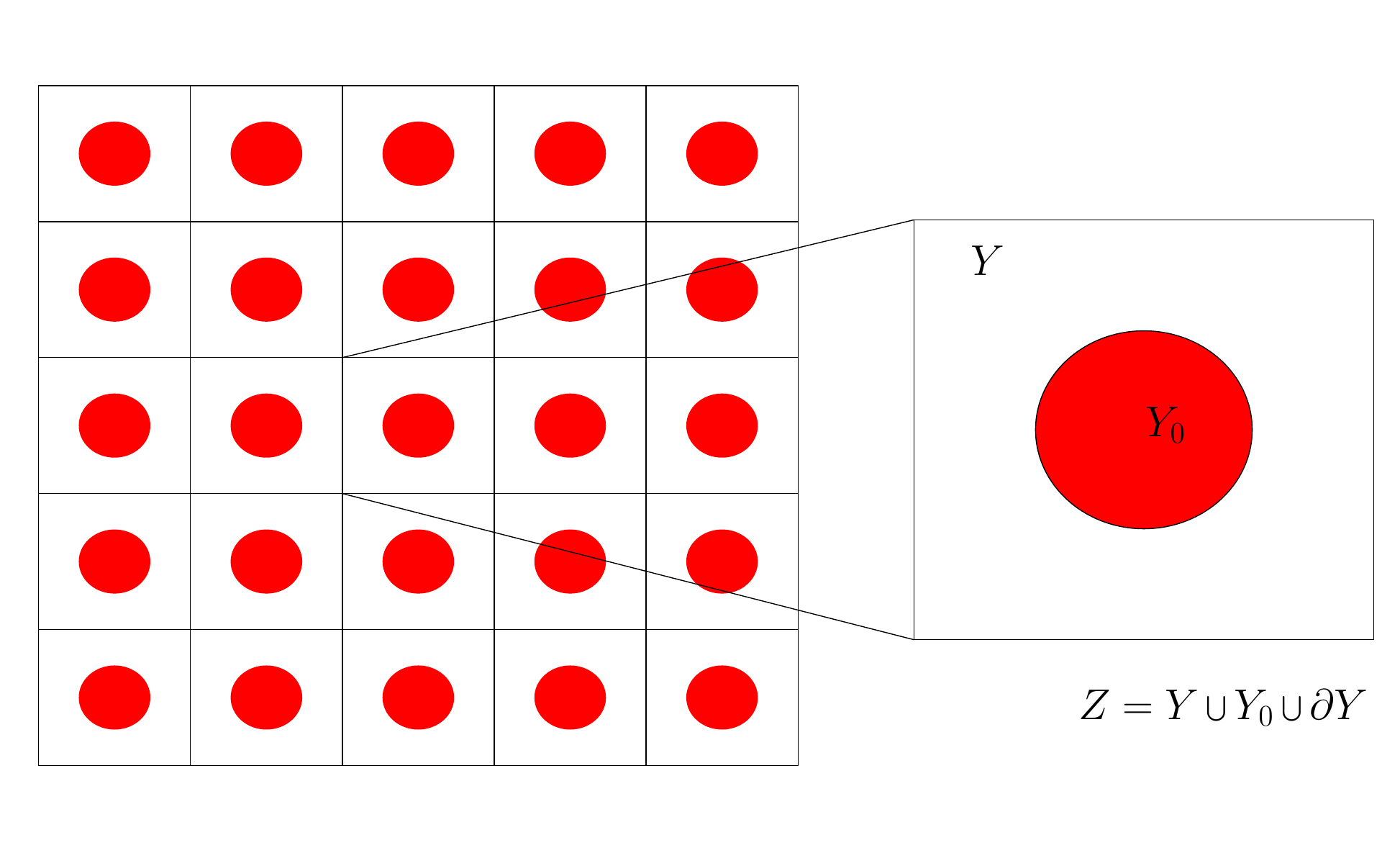}
\caption{Left: the porous medium $\Omega$ consisting of $\ep$-scaled perforated cells distributed periodically; the total void space is $\Omega^\ep$. Right: a reference cell containing the flow/transport part (the pore $Y$) and the perforation (the solid grain) $Y_0$ separated by the interface $\Gamma_G$. Note that the geometry remains fixed in time for a given $\ep$.} \label{fig:PH_schematic}
\end{figure}
\section{Setting of the model}
Let $\varepsilon > 0$ be a sequence of strictly positive numbers tending to zero, with the property that $\frac{1}{\varepsilon} \in \mathbb{N}$. Let $[0,T]$ denote a time interval, with $T>0.$

We consider the domain $\Omega = (0,1)^3 $ consisting of two subdomains: the perforations (representing the solid grains) and the the perforated domain (the pore space) filled with fluid and where flow, diffusion and transport is taking place, see Fig \ref{fig:PH_schematic}.
%Initially, (i.e. at $t=0$)
At the micro scale, the domain of interest (the fluid part) is denoted by $\Omega^\varepsilon$, and the boundary of the perforations by $\Gamma_G^\varepsilon$. The boundary of the domain $\Omega$ consists of two parts
$$
\partial \Omega =  \Gamma_D \cup \Gamma_N \quad \mbox{and} \quad \Gamma_D \cap \Gamma_N = \emptyset.
$$
The outer unit normal to $\partial \Omega$ is denoted by $\boldsymbol{\nu}.$ On $\Gamma_G^\varepsilon$, the boundaries of the perforations, $\boldsymbol{\nu}$ is the unit normal pointing into the perforations.

The microscopic structure of $\Omega^\varepsilon$ and $\Gamma_G^\varepsilon$ is periodic, and is obtained by the repetition of the standard cell $Z= (0,1)^3$ scaled with the small parameter $\ep$. We denote by $Y$ and $Y_0$ the fluid part, respectively the perforation in $Z$. On $\partial Y_0$, we denote by $\boldsymbol{\nu}$ the unit normal to $\Gamma_G$ pointing into the perforation $Y_0$. We assume that
\begin{enumerate}
\item $  \overline{Y}_0 \subset Z, \quad Y =Z \setminus  \overline{Y}_0$,
\item $Y_0$ is a set of strictly positive measure, with piecewise smooth boundary $\Gamma_G=\partial Y_0$.
\end{enumerate}
Let
$$
E_s:= \bigcup_{k \in \mathbb{Z}^3}  \overline{Y_0^k} = \bigcup_{k \in \mathbb{Z}^3} \overline{(Y_0 +k) }.
$$
Then the fluid part of the porous medium $\Omega^\varepsilon$ and the total boundary of the perforations $\Gamma_G^\varepsilon$ are defined as follows:
$$
\Omega^\varepsilon = \Omega \setminus \varepsilon E_s, \quad \Gamma_G^\varepsilon = \Omega \cap \varepsilon \bigcup_{k \in \mathbb{Z}^3} \partial Y_0^k.
$$
We emphasize that the assumption of $\Omega$ being a unit cube can be slightly generalized. The results hold also for domains $\Omega$ with the property that for each $\ep$, there exists  $I^\ep \subset \mathbb{R}^3$ such that
\begin{align*}
\bar{\Omega} = \bigcup \{\ep Z^k: k \in I^\ep\}.
\end{align*}
This means that the domain $\Omega$ the union of entire cells, for all chosen values of $\ep$.

Finally, for any $t \in (0, T]$ we define
$$Q^t = (0, t] \times Q,$$
where $Q$ is one of the sets $\Omega$, $\Omega^\ep$, $\Gamma_G$, or $\Gamma_G^\ep$.

\subsection{The micro scale model} \label{sec:micromodel}
Let us now formulate the equations which model the processes at the microscopic level.
The microscopic model contains two components: the equations  for the flow, and the equations for the chemistry. For the flow, we consider the Stokes system  \index{Stokes equation}
\begin{align}\label{eq:PH:Stokes}
\left\{
\begin{array}{rcl}
\varepsilon^2 \triangle  \textbf{q}^\varepsilon &=&\nabla P^\varepsilon, \\
\nabla \cdot \textbf{q}^\varepsilon &=& 0,
\end{array}
\right.
\end{align}
for all $x \in \Omega^\ep$.
In the above, $\textbf{q}^\ep$ stands for the fluid velocity, $P^\ep$ denotes the pressure inside the fluid. With a proper scaling, when bringing the model to a dimensionless form the dynamic viscosity becomes $\ep^2$ (see e.g. \cite{Hornung}, p. 45). We complement Stokes equations by assigning no-slip boundary conditions at the boundary of the perforations and given Dirichlet boundary conditions at the outer boundary $\partial \Omega$,
\begin{align}\label{eq:PH:StokesBC}
\textbf{q}^\varepsilon = 0, \; \mbox{ on  } \Gamma_G^\varepsilon, \qquad \mbox{ and } \qquad
\textbf{q}^\varepsilon = \textbf{q}_D, \; \mbox{ on  } \partial \Omega,
\end{align}
where $\textbf{q}_D$ is such that $\displaystyle \int\limits_{\partial \Omega}\mathbf{\boldsymbol{\nu}} \cdot \textbf{q}_D = 0$.
As mentioned above, we assume that the chemical processes neither change the micro scale geometry, nor the fluid properties. Therefore the flow component does not depend on the other components of the model, and can be completely decoupled. This means that one can solve first the Stokes system \eqref{eq:PH:Stokes} with the given boundary conditions \eqref{eq:PH:StokesBC} to obtain the fluid velocity $\textbf{q}^\varepsilon$. We further assume that $\textbf{q}^\varepsilon$ is essentially bounded uniformly w.r.t. $\varepsilon$, i.e.
\begin{equation}\label{eq:PH:M_q}
\|\textbf{q}^\varepsilon\|_{\infty, \Omega} \leq M_q < \infty
\end{equation}
for some constant $M_q > 0$. For the Stokes model with homogeneous Dirichlet boundary conditions,
the essential boundedness of $\textbf{q}^\varepsilon$ holds if, for example, the domain is polygonal
(see \cite{KO,KK}). Here we assume that this estimate is uniform in $\ep$. Since the focus here is on the chemistry and recalling that the flow component is an independent one, in what follows we simply assume $\textbf{q}^\ep$ given, having the properties mentioned above.  %The result extends then to the homogenized velocity $q$. \marginpar{Is there any reference in this sense?}

The main interest in this paper is in the subsystem modeling the chemical processes. This takes into account two solute (mobile) species, which are transported by the fluid. In the fluid, these species are diffusing, but no reactions are taking place there. The corresponding model is therefore a convection-diffusion equation in the fluid part $\Omega_\varepsilon$. Following \cite{knabner, porescale}, we simplify the analysis by considering only one immobile species, having the concentration $u^\ep$. This is justified if the two species are having the same diffusion coefficient. Accounting for more species is fairly straightforward.

The chemical processes are encountered at the boundary of perforations, where the mobile species react forming the precipitate. The reaction result is an immobile species (the precipitate) attached to this boundary and having the concentration $v^\ep$. The precipitate may be dissolved, becoming a source of mobile species in the fluid. In the mathematical model, the precipitation and dissolution are rates in the ordinary differential equation defined in every location on the boundary of perforations. Finally, the partial differential equation and the ordinary one are coupled through the boundary conditions.

With the concentrations $u^\ep$ and $v^\ep$ introduced above, the chemistry is described by the following equations
\begin{align}
\label{eq:PH:om} \left \{
\begin{array}{rcll}
\partial_t u^\varepsilon + \nabla \cdot  (\textbf{q}^\ep u^\varepsilon - D \nabla u^\varepsilon) &=& 0, \quad & \text{in} \quad \Omega^{\varepsilon T},\\
-D \boldsymbol{\nu} \cdot \nabla u^\varepsilon &=& \varepsilon n \partial_t v^\varepsilon, \quad & \text{on} \quad \Gamma_G^{\varepsilon T},\\
\partial_t v^\varepsilon &=& k (r(u^\varepsilon)-w^\varepsilon), \quad & \text{on} \quad \Gamma_G^{\varepsilon T},\\
w^\varepsilon  &\in& H(v^\varepsilon), \quad & \text{on} \quad \Gamma_G^{\varepsilon T}.
\end{array}
\right.
\end{align}
The system \eqref{eq:PH:om} is complemented by the following initial and boundary conditions, %\marginpar{I took here $u_D = 0$ to avoid lengthy discussions}
\begin{align}
\label{eq:PH:ic} \left \{ \begin{array}{lll}
u^\varepsilon(0,\cdot) &=& u_I \quad \text {in} \quad \Omega^{\varepsilon },\\
v^\varepsilon(0,\cdot) &=& v_I \quad \text{on} \quad \Gamma_G^{\varepsilon },\\
u^\varepsilon &=& 0, \quad \text{on} \quad \Gamma_D^T, \\
(\textbf{q}^\ep u^\varepsilon - D \nabla u^\varepsilon) \cdot \boldsymbol{\nu} &=& 0, \quad \text{on} \quad \Gamma_N^T.
\end{array}
\right.
\end{align}
As mentioned above, $\textbf{q}^\ep$ solves the Stokes system \eqref{eq:PH:Stokes}-\eqref{eq:PH:StokesBC}, which is not affected by the chemistry and therefore we assume it given. Hence, the unknowns of the microscopic model are $u^\ep$, $v^\ep$, and $w^\ep$.  In particular, $w^\ep$ describes the dissolution rate; the specific choice in \eqref{eq:PH:om}$_4$ will be explained below. Note that $u^\ep$ is defined in the domain $\Omega^\ep$, while $v^\ep$ and $w^\ep$ are defined on the boundaries of perforations, $\Gamma^\ep_G$. The physical constant $D$ is a (given) diffusion coefficient, assumed constant. Further, $k$ is a dimensionless reaction rate constant, which we assume of moderate order w.r.t. $\ep$ and is normalized to $1$. In physical sense, this means that the precipitation sites are homogeneous. The extension to the non-homogeneous case requires some additional technical steps in the proofs but does not pose any major difficulties. Also note that assuming that $D$ and $k$ are moderate w.r.t. $\ep$ implies that the time scales of diffusion, transport and chemical processes are of the same order. Finally, $n$ is a constant denoting the valence of the solute and for simplicity, we will be taking it as $1$.

Clearly, \eqref{eq:PH:om}$_2$ relates the change in the precipitate to the normal flux of the ions at the boundaries, assuming the no-slip boundary condition for $\textbf{q}^\ep$. Also observe the appearance of $\ep$ in the boundary flux. As will be seen below, this allows to control the growth of the precipitate when passing to the limit in the homogenization step. We refer to Chapter 1 of \cite{Hornung} for a justification of this choice based on the geometry of the pores, and to \cite{porescale}, Remark 1.2 for an equivalent interpretation.

We proceed now by explaining the precipitation rate $r(u^\ep)$ and the dissolution rate $w^\ep$ appearing in the last two equations of \eqref{eq:PH:om}.
We assume first that
\begin{enumerate}
\item[] The precipitation rate $r$ depends on the solute concentration, where
\begin{align} \label{eq:PH:assump1} \tag{A.1}
r:\mathbb{R} \rightarrow [0,\infty) \text{ is locally Lipschitz in} \quad \mathbb{R}.
\end{align}
\item[] There exists a unique $u_* \geq 0$, such that
\begin{align} \label{eq:PH:assump2} \tag{A.2}
r(u^\varepsilon) = \left \{ \begin{array}{lll}
0 \quad \text{for} \quad u^\varepsilon \leq u_*,\\
\text{strictly increasing for} \quad u^\varepsilon \geq u_* \quad \text{with}  \quad r(\infty) = \infty.
\end{array}
\right.
\end{align}
\end{enumerate}
An example where these assumptions are fulfilled is given in \cite{knabner}, where a model based on mass-action kinetics is considered. Note that a value $u^* > 0$ exists such that
$$
r(u^*) = 1 .
$$
With the proper scaling, this value is exactly the solubility product mentioned in the introduction. As explained, this value is taken at an equilibrium concentration: if $u^\ep(t, x) = u^*$, neither precipitation, nor dissolution is encountered in $x$ at time $t$.

Finally, the dissolution rate satisfies
$$
w^\ep \in H(v^\varepsilon) ,
$$
where $H(\cdot)$ denotes the Heaviside graph,
$$
H(u) = \left\{
\begin{array}{ll}
\{0\} & \mbox{ if } u < 0, \\
{ [0 , 1 ] } & \mbox{ if } u = 0, \\
\{1\} & \mbox{ if } u > 0.
\end{array}
\right .
$$
This means that whenever precipitate is present, hence $v^\ep(t, x) > 0$, in this point dissolution is encountered at a constant rate, 1 by scaling. One may view this as a surface process: it does not matter how much precipitate is present in one location $x$ on the boundary of perforations at  some time $t$, the dissolution will be encountered strictly at the surface of the precipitate and not in the interior. A more interesting situation appears at sites where the precipitate is absent, thus $v^\ep(t, x) = 0$. Then a value has to be specified for the dissolution rate $w^\ep(t, x) \in [0, 1]$. To important features should be accounted for: no dissolution is allowed whenever precipitate is absent, and further no precipitation should be encountered in the {\it undersaturated} regime, when $u^\ep(t, x) < u^*$. As explained in \cite{porescale, knabner, tycho3}, whenever $v^\ep = 0$, the rate $w^\ep$ depends also on the solute concentration $u^\ep$ at the boundary. Specifically, in the {\it oversaturated} regime, when $u^\ep > u^*$ (the value $u^*$ being introduced above) we take $w^\ep = 1$. Since $r(u^\ep) > 1$, this means that the overall precipitation/dissolution rate is strictly positive, resulting in a net gain in the precipitate. In the undersaturated regime one hase $u^\ep < u^*$, thus $r(u^\ep) \leq 1$. Then the solute concentration cannot support an effective gain in precipitate, and the overal rate remains 0. In particular, dissolution should be avoided in this case. To achieve this, we take $w^\ep = r(u^\ep) \in [0, 1)$ and the overall rate becomes 0. Finally, since $r(u^*) =1$, the case $u^\ep = u^*$ leads to an equilibrium, regardless of the value of $v^\ep$. This can be summarized as
\begin{align}\label{eq:PH:w=r}
w^\ep = \left\{
\begin{array}{ll}
0 \qquad &\mbox{ if } v^\ep < 0, \\
\min\{r(u^\ep), 1\} &\mbox{ if } v^\ep = 0, \\
1 \qquad &\mbox{ if } v^\ep > 0.
\end{array}
\right .
\end{align}
The dissolution rate is defined for unphysical, negative values of $v^\ep$ for the sake of completeness. We will prove below that whenever the initial precipitate concentration is non-negative, no negative concentrations can be obtained. Note that in the above relation, $w^\ep \in H(v^\ep)$ and is a discontinuous function of $v^\ep$ and not an inclusion. In other words, the value of $w^\ep$ is well specified in the case $v^\ep = 0$ too. This choice is justified also from mathematical point of view, as regularization arguments employed in \cite{porescale} for obtaining the existence of a solution lead to the above form for $w^\ep$.
%In what follows, we will work with the above formulation of $w^\ep$.
%\textcolor{red}{I have made changes: HERE SORIN OFFERED TO MAKE THE CONNECTION TO THE PARAGRAPH IN SECTION 2.2, WHERE IT IS MENTIONED THAT (2.6) HOLDS ACTUALLY FOR SOLUTIONS OBTAINED BY REGULARIZATION!!!!!!!!!!!! ALSO IT IS IMPORTANT TO SEE WHTHER THE LAST SENTENCE CAN BE CUT: In what follows, we will work with the above formulation of $w^\ep$.}

\subsection{The variational formulation of the microscopic problem}
When defining a weak solution we use common notations in the functional analysis: with $Q$ being either $\Omega$, $\Omega^\ep$, $\Gamma_D$, $\Gamma_G$ or $\Gamma_G^\ep$, we denote by $L^p(Q)$ $(p \geq 1)$ the $p$--integrable functions on $Q$ (in the sense of Lebesgue). The space $H^1_{0,\Gamma_D}(Q)$ restricts the space $H^1(Q)$ of functions having all first order partial derivatives in $L^2$ to those elements vanishing on $\Gamma_D$ (in the sense of traces). Similarly, $W^{k, p}(\Omega)$ contains the functions having the partial derivatives up to the $k^{th}$ order in $L^p$. $(\cdot, \cdot)_Q$ stands for the scalar product in $L^2(Q)$; if $Q = \Omega^\ep$ or $Q = \Omega$, it also denotes the duality pairing between $H^1_{0,\Gamma_D}(Q)$ and $H^{-1}(Q)$ -- the dual of $H^1_{0,\Gamma_D}(Q)$. The corresponding norm is denoted by $\| \cdot \|_{Q}$, or simply $\| \cdot \|$ (where self understood). By $L^\infty(Q)$ we mean functions that are essentially bounded on $Q$, and the essential supremum is denoted by $\|u\|_{\infty, Q}$. Further, for a Banach space $V$ we denote by $L^2(0,T; V)$ the corresponding Bochner space equipped with the standard inner product (where applicable) and norm. Besides, by $\chi_I$ we mean the characteristic function of the set $I$.

Before stating the definition of a weak solution, we introduce the function spaces
\begin{align*}
 \mathcal{U}^\ep &:= \{ u \in L^2 (0,T;H_{0,\Gamma_D}^1 (\Omega^\ep)) \quad : \quad \partial_t u \in L^2 (0,T ; H^{-1}(\Omega^\ep))\},\\
 \mathcal{V}^\ep &:=   H^1 (0,T; L^2(\Gamma_G^\ep)) ,\\
 \mathcal{W}^\ep &:= \{ w \in L^\infty (\get) \quad : \quad 0 \leq w \leq 1\}.
\end{align*}
Then a weak solution is introduced in
\begin{definition}\label{def:weakeps}
A triple $(u^\varepsilon,v^\varepsilon,w^\varepsilon) \in \mathcal{U}^\ep \times \mathcal{V}^\ep \times \mathcal{W}^\ep$ is called a weak solution of \eqref{eq:PH:om}-\eqref{eq:PH:ic} if $u^\varepsilon(0,\cdot ) = u_I, v^\varepsilon(0,\cdot) = v_I,$
\begin{align}
\label{eq:PH:w}
\begin{array}{rcl}
(\partial_t u^\varepsilon,\phi)_{\Omega^{\ep T}} + D (\nabla u^\varepsilon , \nabla \phi)_{\Omega^{\ep T}} - (\textbf{q}^\ep u^\varepsilon, \nabla \phi )_{\Omega^{\ep T}} &=& -\varepsilon (\partial_t v^\varepsilon, \phi)_{\get}, \\
 (\partial_t v^\varepsilon, \theta)_{\get} &=& (r(u^\varepsilon)-w^\varepsilon, \theta)_{\get},
  %w^\varepsilon &\in& H(v^\varepsilon) \quad \text{a.e. in} \quad \get,
 \end{array}
 \end{align}
for all $(\phi,\theta) \in L^2 (0,T;H^1_{0,\Gamma_D}(\Omega^\ep)) \times L^2 (\get)$, and  $w^\ep$ satisfies \eqref{eq:PH:w=r} a.e. in ${\get}$.
\end{definition}
%\textcolor{red}{KK has made changes. M:HERE WE SHOULD ALSO INCLUDE: satisfying (2.6) a.e. in $\get$ }

%%%%%%%% $n$  and $k$ have been removed in the macroscopic eqn above.
%%%%*************************************
%\marginpar{Please, one more check! :)}
For the functions appearing as boundary and initial conditions we assume the following
\begin{align*} \label{eq:PH:assump3} \tag{A.3}
%u_I \in H_{0,\Gamma_D}^1(\Omega),
u_I \in W_{0,\Gamma_D}^{2, \alpha}(\Omega),
v_I \in H^1(\Omega) , \text{ and } 0 \leq u_I, v_I \leq M_0 \text{ a.e.}, \text{ satisfying }\\
-{\bf \nu} D\nabla u_I = \varepsilon(r(u_I) - w_I),  \text{ (compatibility condition)}.
\end{align*}
%\textcolor{red}{KK corrected. IS IT NECESSARY TO WRITE $v_I \in L^\infty(\Omega)$ if this is followed by $0 \leq u_I, v_I \leq M_0$ ????????}
The constant $M_0 > 0$ is $\ep$-independent, while $\alpha > 1$. Further, $w_I$ appearing in the compatibility condition satisfies \eqref{eq:PH:w=r}.
%\marginpar{Kundan, why was $\alpha > 1$ needed?}

Note that the initial precipitation concentration $v_I$ is assumed in $H^1(\Omega)$. For the micro scale model, we consider its trace on $\Gamma_G^\ep$. For simplicity we considered homogeneous conditions on $\Gamma_D$, but the extension to non-homogeneous ones can be carried out without major difficulties. Note that the initial and boundary conditions are compatible, and that the initial conditions are defined for the entire domain $\Omega$.

The existence of weak solutions to \eqref{eq:PH:om}-\eqref{eq:PH:w=r} is proved in \cite{porescale} by regularizing the Heaviside graph. Clearly, the solutions of the regularized problems depend on the regularization parameter. Passing this parameter to zero, one obtains a convergent sequence of solutions; its limit is a weak solution to the original problem, in the sense of Definition \ref{def:weakeps}. Furthermore, the uniqueness of a solution is obtained in \cite{tycho3} by proving the following contraction result with respect to the initial values
\begin{theorem} \label{th:l1}
 Assume \eqref{eq:PH:assump1} and \eqref{eq:PH:assump2} and let $(u^{(i)^\varepsilon}, v^{(i)^\varepsilon},w^{(i)^\varepsilon}) \in \mathcal{U}^\ep,\mathcal{V}^\ep, \mathcal{W}^\ep, i=1,2$ be two solutions in the sense of Definition \ref{def:weakeps}, obtained for the initial values $\; u_I^{(i)}, v_I^{(i)} (i=1,2)$ respectively. Then for any $t \in (0,T]$ it holds
\begin{align}
\nonumber \displaystyle \int\limits_{\Omega^\ep} |u^{(1)\varepsilon}(t,x) - u^{(2)\varepsilon}(t,x)| dx + \varepsilon  \int\limits_{\Gamma_G^\ep} |v^{(1)\varepsilon}(t,s)-v^{(2)\varepsilon}(t,s)| ds \\
 \leq  \displaystyle \int\limits_{\Omega^\ep} |u_I^{(1)}(x) - u_I^{(2)}(x)| dx + \varepsilon  \int\limits_{\Gamma_G^{\varepsilon }} |v_I^{(1)}(s)-v_I^{(2)}(s)| ds
\end{align}
\end{theorem}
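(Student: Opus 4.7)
The plan is a Kato-type $L^1$-contraction argument, adapted to the coupled PDE/ODE system and to the discontinuous selection \eqref{eq:PH:w=r}. Write $U = u^{(1)\varepsilon} - u^{(2)\varepsilon}$, $V = v^{(1)\varepsilon} - v^{(2)\varepsilon}$, $R = r(u^{(1)\varepsilon}) - r(u^{(2)\varepsilon})$, $W = w^{(1)\varepsilon} - w^{(2)\varepsilon}$. Subtracting the weak formulations, $U$ satisfies a linear convection-diffusion problem in $\Omega^{\varepsilon T}$ with coupling $-D\boldsymbol{\nu}\cdot\nabla U = \varepsilon\partial_t V$ on $\Gamma_G^{\varepsilon T}$, while $\partial_t V = R-W$ a.e.\ on $\Gamma_G^{\varepsilon T}$.

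First I would introduce an odd, smooth, non-decreasing approximation $S_\delta$ of the sign graph with $|S_\delta|\le 1$ and $S_\delta(0)=0$, and set $\Psi_\delta(s) := \int_0^s S_\delta$, $G_\delta(s) := \int_0^s \tau\,S_\delta'(\tau)\,d\tau$; for the standard piecewise-linear choice $|G_\delta|\le \delta/2$ uniformly. Since $U$ vanishes on $\Gamma_D$, $S_\delta(U)$ is admissible in Definition \ref{def:weakeps}. Testing the $U$-equation with $S_\delta(U)$ and rewriting the convection term through $U\,S_\delta'(U)\nabla U = \nabla G_\delta(U)$, then using $\nabla\cdot\textbf{q}^\varepsilon=0$ and $\textbf{q}^\varepsilon=0$ on $\Gamma_G^\varepsilon$, produces
\begin{equation*}
\tfrac{d}{dt}\!\!\int_{\Omega^\varepsilon}\!\!\Psi_\delta(U) + D\!\!\int_{\Omega^\varepsilon}\!\!S_\delta'(U)|\nabla U|^2 = -\varepsilon\!\!\int_{\Gamma_G^\varepsilon}\!\!(R-W)\,S_\delta(U) + \int_{\Gamma_N}\!\!G_\delta(U)\,\textbf{q}^\varepsilon\!\cdot\!\boldsymbol{\nu}.
\end{equation*}
The diffusive term is non-negative and the $\Gamma_N$ remainder is $O(\delta)$ by \eqref{eq:PH:M_q}. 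Testing the $V$-equation with $\varepsilon S_\delta(V)$ yields $\varepsilon\tfrac{d}{dt}\!\int_{\Gamma_G^\varepsilon}\!\Psi_\delta(V) = \varepsilon\!\int_{\Gamma_G^\varepsilon}(R-W)\,S_\delta(V)$. Adding these, integrating over $(0,t)$, and sending $\delta\to 0$ by dominated convergence (all quantities are bounded on $\Gamma_G^{\varepsilon T}$) yields the stated estimate up to the residual $\varepsilon\!\int_0^t\!\!\int_{\Gamma_G^\varepsilon}(R-W)(\mathrm{sgn}^0(V)-\mathrm{sgn}^0(U))$, with $\mathrm{sgn}^0(0):=0$. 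It remains to show this residual is non-positive.

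The main obstacle is the pointwise inequality $(R-W)(\mathrm{sgn}^0(V)-\mathrm{sgn}^0(U))\le 0$ a.e.\ on $\Gamma_G^{\varepsilon T}$. Monotonicity of $r$ gives $R\cdot\mathrm{sgn}^0(U)=|R|$, so the $R$-contribution alone satisfies $R(\mathrm{sgn}^0(V)-\mathrm{sgn}^0(U))\le 0$. The $W$-contribution is where \eqref{eq:PH:w=r} enters essentially; I would enumerate sub-cases by the signs of $v^{(1)\varepsilon}$ and $v^{(2)\varepsilon}$. When $V\ne 0$ and both $v^{(i)\varepsilon}$ lie in a region where $w$ is constant in $v$, monotonicity of the Heaviside graph gives $W\,\mathrm{sgn}^0(V)\ge 0$ with the correct sign; the mixed sub-cases (exactly one $v^{(i)\varepsilon}=0$) are closed by direct computation of $W$ from \eqref{eq:PH:w=r}, checking that $R-W$ has the required sign for both signs of $U$. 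The delicate case is $\{V=0,\,v^{(1)\varepsilon}=v^{(2)\varepsilon}=0\}$: there $R-W = (r(u^{(1)\varepsilon})-1)^+ - (r(u^{(2)\varepsilon})-1)^+$, which is non-decreasing in $u$, so $-(R-W)\,\mathrm{sgn}^0(U)\le 0$. Any other measurable selection from $H(v^\varepsilon)$ at $\{v^\varepsilon=0\}$ would threaten this last sub-case, so it is precisely the choice \eqref{eq:PH:w=r} that makes the argument close. With the pointwise inequality in hand, the residual is non-positive and the contraction follows.
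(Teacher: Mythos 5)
Your proposal is correct and follows essentially the same route the paper relies on: a Kato-type $L^1$-contraction with a regularized sign/absolute-value pair (your $S_\delta,\Psi_\delta$ are the paper's $\mathcal{S}_\delta,\mathcal{T}_\delta$), testing the two equations with $S_\delta(U)$ and $\varepsilon S_\delta(V)$, and closing the boundary term through the monotonicity of $r(u)-w$ in $u$ and $-v$ induced by the selection \eqref{eq:PH:w=r} — exactly the argument the paper attributes to \cite{tycho3} and reproduces for difference quotients in Lemma \ref{lemma:regularity}. Your case enumeration on the signs of $v^{(1)\varepsilon},v^{(2)\varepsilon}$ and your explicit $O(\delta)$ treatment of the $\Gamma_N$ convection remainder are only cosmetic variations on that proof.
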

%%%%%%%% $n$  and $k$ have been removed in the eqn above.

%%%%%%%%%%%% MAIN RESULT %%%%%%%%%%%%%%%%
\section{The macroscopic model and the main result} \label{sec:macromodel}
In this paper we let $\ep \to 0$ and investigate the limit behaviour of the solutions to the microscopic system \eqref{eq:PH:Stokes}-\eqref{eq:PH:StokesBC}, \eqref{eq:PH:om}-\eqref{eq:PH:ic}. We prove the convergence to the unique solution of the homogenized (macroscopic) system of differential equations defined below. For the flow component, the macroscopic variables $(\textbf{q}, P)$ satisfy the Darcy law \index{Darcy law}
\begin{align}\label{eq:PH:darcy}
\nabla \cdot \textbf{q} = 0,\quad \textbf{q} = -K \nabla P,
\end{align}
for all $x \in \Omega$. The permeability tensor $K$ has the components
\begin{equation}
k_{i j} = \frac{1}{|Y|}\displaystyle \int\limits_{Y} \chi_i^j(y)dy, \qquad \text{ for all } i, j \in \{1, 2, 3\},
\end{equation}
%for all $i, j = 1, 2, 3$. \textcolor{red}{KK has made changes. DO WE REALLY WANT TO CONSIDER THE SPACE DIMENSION d? I THINK $n=3$ would be OK!}
where $\chi_i^j$ is the $i$-th component of $\boldsymbol{\chi}^j =(\chi_1^j, \chi_2^j, \chi_3^j)$ solving the cell problems $(j \in \{1, 2,, 3\})$ \index{Cell problem}
\begin{equation}
(P^D_j)\left\{
\begin{array}{rcll}
%-\mu
- \Delta_y \boldsymbol{\chi}^j(y)&=&\nabla_y \Pi^j(y)+\textbf{e}_j,  & \text{ in } Y\\[0.5em]
\nabla_y \cdot \boldsymbol{\chi}^j(y)&=&0,  &\text{ in } Y\\[0.5em]
\boldsymbol{\chi}^j(y)&=&0,  &\text{ on } \Gamma_G\\[0.5em]
\boldsymbol{\chi}^j, \Pi^j &\text{are}& Z - \text{periodic}.
\end{array}
\right.
\end{equation}
The homogenized model component referring to the chemistry, the solution triple $(u, v, w)$ representing the upscaled solute concentration, precipitate concentration, and the macroscopic dissolution rate are solution to the system %\textcolor{red}{HERE JUST THE FIRST EQUATION IS DEFINED ON OMEGA. THE OHTER TWO ARE LOCAL PROBLEMS ON GAMMA!!!!!!KK comments: NOT REALLY BECAUSE OF INITIAL CONDITION FOR V IN H1, THE SOLUTION IS INDEPENDENT OF Y AND HENCE ALL THE EQUATIONS BELOW ARE DEFINED ON OMEGA. }

%%%%%%%% $n$  and $k$ have been removed in the macroscopic eqn below.
\begin{align}\label{eq:PH:chem_homog}
\left\{
\begin{array}{rcl}
\partial_t \left(u +  \frac {|\Gamma_G|}{|Y|} v \right) &=& \nabla \cdot \left(S \nabla u -  \textbf{q} u \right), \\
\partial_t v &=& (r(u)- w), \\
w &\in& H(v),
\end{array}
\right.
\end{align}
%\textcolor{red}{KK  made changes. HERE I GUESS WE AGAIN HAVE TO COMPLETE THE MODEL BY REALTIONS OF THE TYPE (2.6)???????}
for all $x \in \Omega$ and $t \in (0, T]$. In addition, analogous to \eqref{eq:PH:w=r}, macroscopic $w$ satisfies,
\begin{align}\label{eq:w=r}
w = \left\{
\begin{array}{ll}
0 \qquad &\mbox{ if } v < 0, \\
\min\{r(u), 1\} &\mbox{ if } v = 0, \\
1 \qquad &\mbox{ if } v > 0.
\end{array}
\right .
\end{align}
The components of the diffusion tensor $S$ are defined by \index{Upscaled diffusion}
\begin{equation}
(S)_{i,j} = D \left[ \delta_{ij}+\frac 1 {|Y|} \displaystyle \int\limits_Y \partial_{y_j} \xi_i dy \right], \qquad \text{ for all } i, j \in \{1, 2, 3\} .
\end{equation}
The functions $\xi_i$ are solutions of the following cell problems $(i \in \{ 1, 2, 3\})$
\begin{equation}
(P^C_i)\left\{
\begin{array}{rcll}
-\Delta \xi_i &=& 0 \; & \text{ in } Y, \\[0.5em]
\boldsymbol{\nu} \cdot \nabla \xi_i &=& \boldsymbol{\nu} \cdot \boldsymbol{e}_i \; & \text{on} \quad \Gamma_G\\[0.5em]
\xi_i &\text{is}& Z  &\text{periodic}.
\end{array}
\right.
\end{equation}
The system \eqref{eq:PH:chem_homog} is complemented by the boundary and initial conditions%\marginpar{Here in fact there will be no $\Gamma_G$ dependence; maybe we should change the proof accordingly!}
%retained from \eqref{eq:PH:ic}
\begin{align}
\label{eq:PH:uic} \left \{ \begin{array}{lll}
u(0,\cdot) &=& u_I \quad \text{in} \quad \Omega,\\
v(0,\cdot) &=& v_I \quad \text{in} \quad \Omega, \\
u &=& 0, \quad \text{on} \quad \Gamma_D^T\\
%\left(S \nabla u -  \textbf{q} u \right) \cdot \boldsymbol{\nu} &=&0,  \quad \text{on} \quad \Gamma_N^T.
\end{array}
\right.
\end{align}
%To be precise, with matrix $S$ defined as
%\begin{align*}
%(S)_{i,j} = |Y| \delta_{ij}+\displaystyle \int\limits_Y \partial_{y_j} \xi_i;
%\end{align*}
%and $\xi_i$ solving the cell problem
%\begin{align*}
%-\triangle \xi_i = 0 \; \text{in} \; Y, \quad \nabla \cdot \xi_i = \textbf{e}_i \cdot \boldsymbol{\nu} \; \text{on} \; \partial Y,
%\end{align*}

As for the micro scale model, we are interested in the chemistry component of the upscaled model, for which a weak solution is defined below.
%We have the following definition for the weak solution of the upscaled equations \eqref{eq:PH:chem_homog}-\eqref{eq:PH:uic}.
%\begin{definition}\label{def:PH:weak_upscaled}
%A triple $(u,v,w)$ with $u \in L^2(0,T;H^{1}(\Omega)); \; \partial_t u \in L^2(0,T;H^{-1}(\Omega))$, $v \in L^\infty(0,T;L^2(\Omega \times \Gamma_G)), w \in L^\infty(0,T;L^2(\Omega \times \Gamma_G))$ is called a  weak solution of \eqref{eq:PH:chem_homog}-\eqref{eq:PH:uic} if $(u(0),v(0)) = (u_I,v_I)$,  and
%\begin{align}
%\nonumber (\partial_t u , \phi)_{\Omega^T} + D(S \nabla u, \nabla \phi)_{\Omega^T} &= - (\textbf{q} u, \nabla \phi)_{\Omega^T} - ( \partial_t v , \phi)_{\Omega^T \times \Gamma_G},  \\
%\label{eq:PH:m} (\partial_t v, \theta)_{L^2({\Omega^T \times \Gamma_G})}  &= \left ( r(u)- w , \theta \right )_{L^2({\Omega^T \times \Gamma_G})},\\
%\nonumber w &\in H(v), \text{ satisfying } \eqref{eq:w=r}
%\end{align}
%$\text{ for all } (\phi,\theta) \in L^2(0,T,H_{0,\Gamma_D}^1(\Omega)) \times L^2(0,T;L^2(\Omega \times \Gamma_G))$.
%\end{definition}
\begin{definition}\label{def:PH:weak_upscaled}
A triple $(u,v,w)$ with $u \in L^2(0,T;H^{1}(\Omega)); \; \partial_t u \in L^2(0,T;H^{-1}(\Omega))$, $v \in L^\infty(0,T;L^2(\Omega)), w \in L^\infty(0,T;L^2(\Omega))$ is called a  weak solution of \eqref{eq:PH:chem_homog}-\eqref{eq:PH:uic} if $(u(0),v(0)) = (u_I,v_I)$,  and
\begin{align}
\nonumber (\partial_t u , \phi)_{\Omega^T} + D(S \nabla u, \nabla \phi)_{\Omega^T} &= - (\textbf{q} u, \nabla \phi)_{\Omega^T} - ( \partial_t v , \phi)_{\Omega^T},  \\
\label{eq:PH:m} (\partial_t v, \theta)_{L^2({\Omega^T })}  &= \left ( r(u)- w , \theta \right )_{L^2({\Omega^T})},\\
\nonumber w &\in H(v), \text{ satisfying } \eqref{eq:w=r}
\end{align}
$\text{ for all } (\phi,\theta) \in L^2(0,T,H_{0,\Gamma_D}^1(\Omega)) \times L^2(0,T;L^2(\Omega))$.
\end{definition}

%\begin{definition}\label{def:PH:weak_upscaled}
%A triple $(u,v,w)$ with $u \in L^2(0,T;H^{1}(\Omega)); \; \partial_t u \in L^2(0,T;H^{-1}(\Omega))$, $v \in L^\infty(0,T;L^2(\Omega \times \Gamma_G)), w \in L^\infty(0,T;L^2(\Omega \times \Gamma_G))$ is called a  weak solution of \eqref{eq:PH:chem_homog}-\eqref{eq:PH:uic} if $(u(0),v(0)) = (u_I,v_I)$,  and
%\begin{align}
%\nonumber (\partial_t u , \phi)_{\Omega^T} + D(S \nabla u, \nabla \phi)_{\Omega^T} &= - (\textbf{q} u, \nabla \phi)_{\Omega^T} - ( \partial_t v , \phi)_{\Omega^T \times \Gamma_G},  \\
%\label{eq:PH:m} (\partial_t v, \theta)_{L^2({\Omega^T })}  &= \left ( r(u)- w , \theta \right )_{L^2({\Omega^T})},\\
%\nonumber w &\in H(v), \text{ satisfying } \eqref{eq:w=r}
%\end{align}
%$\text{ for all } (\phi,\theta) \in L^2(0,T,H_{0,\Gamma_D}^1(\Omega)) \times L^2(0,T;L^2(\Omega))$.
%\end{definition}

The main result is as follows:
\begin{theorem} \label{th:PH:main}
As $\ep \searrow 0$, the sequence of micro-scale weak solutions $(u^\ep,v^\ep,w^\ep)$ of problem \eqref{eq:PH:w=r} - \eqref{eq:PH:w}   converges to the unique weak solution $(u,v,w)$ of the upscaled model \eqref{eq:PH:m}. %, in the sense of Definition \ref{def:PH:weak_upscaled}.
\end{theorem}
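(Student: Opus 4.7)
The plan is to derive the upscaled system by the two-scale convergence method, with the key difficulty residing in passing to the limit in the multi-valued dissolution rate $w^\ep$. I would proceed in four stages.

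\textbf{Stage 1: $\ep$-uniform a priori estimates.} First I would establish the non-negativity and uniform $L^\infty$ bounds $0 \le u^\ep, v^\ep \le M$, with $M$ depending only on $u_I$, $v_I$, $T$ and $r$, by testing \eqref{eq:PH:w} with suitable truncations and using $w^\ep \in [0,1]$ together with \eqref{eq:PH:assump2}. Testing with $\phi = u^\ep$ and combining with the ODE yields the energy estimate giving uniform bounds for $\|u^\ep\|_{L^2(0,T;H^1(\Omega^\ep))}$ and $\sqrt{\ep}\,\|v^\ep\|_{L^\infty(0,T;L^2(\get))}$, while the ODE directly delivers a bound for $\sqrt{\ep}\,\|\partial_t v^\ep\|_{L^2(\get)}$. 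The bound for $\partial_t u^\ep$ is delicate because of the low regularity of $w^\ep$; as indicated in the introduction, I only expect $\partial_t u^\ep \in L^2(0,T;H^{-1}(\Omega^\ep))$ uniformly in $\ep$, which is exactly what the framework requires.

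\textbf{Stage 2: Extension and two-scale convergence.} I would extend $u^\ep$ from $\oet$ to $\Omega^T$ via a standard uniformly bounded extension operator for periodically perforated domains, obtaining $\tilde u^\ep$ satisfying the same $H^1$ estimates up to a constant. The uniform bounds then produce, along a subsequence, two-scale limits
\begin{align*}
\tilde u^\ep \rightharpoonup u(t,x), \qquad \nabla \tilde u^\ep \rightharpoonup \chi_Y(y)\bigl(\nabla_x u + \nabla_y u_1\bigr),
\end{align*}
with $u \in L^2(0,T;H^1_{0,\Gamma_D}(\Omega))$ and $u_1 \in L^2(\Omega^T;H^1_{\mathrm{per}}(Y)/\mathbb{R})$. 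For the boundary quantities I would invoke the boundary two-scale convergence framework of \cite{Maria}: up to subsequences, $v^\ep$, $\partial_t v^\ep$ and $w^\ep$ on $\get$ two-scale converge to limits $v(t,x,y)$, $\partial_t v(t,x,y)$ and $w(t,x,y)$ on $\Omega^T \times \Gamma_G$.

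\textbf{Stage 3: Strong two-scale convergence and identification of $w$.} The crucial step, and the main obstacle, is identifying the limit of $w^\ep \in H(v^\ep)$; weak two-scale convergence of $v^\ep$ is insufficient since $H$ is multi-valued and discontinuous. I would employ the unfolding operator in the spirit of \cite{anna} and \cite{maria-willi}, proving compactness of the unfolded sequence $\mathcal{T}^\ep v^\ep$ in $L^2(\Omega^T\times\Gamma_G)$ (using the $\sqrt{\ep}$-scaled ODE estimate together with continuity in $t$) and deducing strong two-scale convergence of $v^\ep$ on $\get$. Combining this with an Aubin--Lions argument on $\tilde u^\ep$ (made possible by the $H^{-1}$ time-derivative bound) and its trace theorem yields strong convergence sufficient to pass to the limit in $r(u^\ep)$ on $\get$. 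The pointwise characterization \eqref{eq:PH:w=r} of $w^\ep$ as an honest function of $(u^\ep,v^\ep)$ then transfers to the two-scale limit, giving $w(t,x,y)$ with the analogous representation. Since the ODE for $v$ is pointwise in $y$ and $v_I$ is $y$-independent, standard arguments force $v$ and $w$ to be independent of $y$, yielding the macroscopic form.

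\textbf{Stage 4: Cell problems, macroscopic equations, uniqueness.} Choosing test functions of the form $\phi(t,x) + \ep\phi_1(t,x,x/\ep)$ in \eqref{eq:PH:w} and passing to the two-scale limit produces, in the standard way, the cell problem $(P^C_i)$ for $u_1$, the effective diffusion tensor $S$, and the macroscopic system of Definition \ref{def:PH:weak_upscaled}; the factor $|\Gamma_G|/|Y|$ emerges from the boundary integral $\ep(\partial_t v^\ep,\phi)_{\get}$ via the standard mean-value identity on $\Gamma_G$. Uniqueness for the upscaled system follows by adapting the $L^1$-contraction argument of Theorem \ref{th:l1} at the macroscopic scale, which simultaneously upgrades subsequential convergence to convergence of the entire sequence and completes the proof.
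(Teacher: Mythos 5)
Your four-stage architecture matches the paper's overall strategy, but it glosses over the two points that the paper identifies as the actual difficulties, and at both points the step as you state it would not close. First, in Stage 2 you invoke "a standard uniformly bounded extension operator" and then in Stage 3 rely on "the $H^{-1}$ time-derivative bound" for $\tilde u^\ep$ to run Aubin--Lions. But the uniform bound on $\partial_t u^\ep$ in $L^2(0,T;H^{-1}(\Omega^\ep))$ is a bound on a functional acting on $H^1_{0,\Gamma_D}(\Omega^\ep)$ and does not transfer through the extension to a bound on $\partial_t \tilde u^\ep$ in $L^2(0,T;H^{-1}(\Omega))$; the usual remedy of differentiating the system in time to get a stronger estimate on $\partial_t u^\ep$ is unavailable precisely because $w^\ep$ is a discontinuous function of $v^\ep$. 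This is the main technical contribution of the paper: Lemma \ref{lemma:regularity} proves the uniform $L^1$ difference-quotient estimate $\int_{\Omega^\ep}|\triangle_h u^\ep(t)|\,dx+\ep\int_{\Gamma_G^\ep}|\triangle_h v^\ep(t)|\le C$ (uniformly in $h$ and $\ep$) by an $L^1$-contraction argument with the regularized sign function $\mathcal S_\delta$, the monotonicity of $(u,v)\mapsto r(u)-H(v)$, and the compatibility condition in \eqref{eq:PH:assump3} to handle $t\in[0,h]$; this feeds into Lemmas \ref{lemma:intermediate} and \ref{lemma:ext3}, where the cell averages $m_u$ entering the explicit reflection-plus-mean-value extension are shown to lie in $W^{1,\infty}(0,T)$, which is what finally yields $\partial_t\tilde u^\ep\in L^2(0,T;H^{-1}(\Omega))$ uniformly. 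Without this chain your strong convergence of $u^\ep$, and hence the passage to the limit in $r(u^\ep)$ on $\get$, is unsupported.

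Second, your mechanism for the strong convergence of the unfolded precipitate concentration is wrong: compactness of $T^\ep v^\ep$ in $L^2((0,T)\times\Omega\times\Gamma_G)$ cannot follow from the $\sqrt{\ep}$-scaled ODE estimate "together with continuity in $t$", because $v^\ep$ satisfies no spatial estimates on $\Gamma_G^\ep$, so there is no equicontinuity in $x$ or $y$ and neither Aubin--Lions nor Kolmogorov applies. The paper's Lemma \ref{lemma:2s_strong_vep} instead proves that $T^\ep v^\ep$ is a Cauchy sequence: subtracting the unfolded ODEs for two indices $\ep_n,\ep_m$ and testing with the difference, the term $(T^{\ep_n}v^{\ep_n}-T^{\ep_m}v^{\ep_m})(T^{\ep_n}w^{\ep_n}-T^{\ep_m}w^{\ep_m})$ is nonnegative because \eqref{eq:PH:w=r} makes $w^\ep$ monotone in $v^\ep$, and the remaining term is controlled by the strong convergence of $T^\ep r(u^\ep)$ and Gronwall. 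You should replace your compactness claim by this monotonicity/Cauchy argument (or a variational-inequality formulation as in the cited literature). A minor divergence: for uniqueness of the macroscopic problem the paper uses an $L^2$ energy/Gronwall argument (testing with $\chi_{(0,t)}V$ and then with $U(t)$ after integrating the equation in time), not an adaptation of the $L^1$ contraction of Theorem \ref{th:l1}; your route is plausible but would need the convection term and the $|\Gamma_G|/|Y|$ coupling handled explicitly at the macroscale.
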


The notion of convergence will be made more precise in the following sections. We remark that the effective solution $(u,v,w)$ does not depend on the microscopic variable $y \in \Gamma_G$. This results from the fact that initial conditions are considered homogeneous and the processes at the boundaries of perforations are also homogeneous.  Finally, since the flow component is completely decoupled from the chemistry, it is sufficient to quote existing results for the transition from the micro scale (Stokes) model to the upscaled (Darcy) one. In this sense we refer to \cite{All89,  jaeger-mikelic, MA87, SP}.
%Further we remark the initial precipitation is defined on the entire $\Omega$, and has explicit $\Gamma_G$ dependence. We will prove that the upscaled $u$ does not depend on $\Gamma_G$ either. As the rates are homogeneous on $\Gamma_G$, this means that actually $v$ and $w$ are constant on $\Gamma_G$.
\section{Uniform estimates for the microscopic solutions}
In this section, we provide estimates for the solutions of the microscopic problem that are uniform with respect to $\ep.$ These will allow passing to the limit $\ep \to 0$, and obtaining the solution to the homogenized model.
In doing so, we recall the a-priori estimates obtained in \cite{porescale}, without considering particularly the homogenization problem.
According to Remarks 2.12 and 2.14 of \cite{porescale}, in the case of a periodically perforated medium (this being the situation here), these estimates are $\ep$-uniform. From \cite{porescale, tycho3} one has:
\begin{theorem}\label{th:estim1}
Assume \eqref{eq:PH:assump1} and \eqref{eq:PH:assump2}, there exists a unique weak solution of  \eqref{eq:PH:om}-\eqref{eq:PH:w=r} in the sense of Definition \ref{def:weakeps}. % \textcolor{red}{HERE WE HAVE TO ADD (2.5) AND (2.6) !?. KK: I added.}.
Moreover, this solution satisfies the following estimates
\begin{align}
0 \leq u^\varepsilon, v^\varepsilon \leq M, & \qquad 0 \leq w^\ep \leq 1, \\
%w^\varepsilon \in [0,1],\\
\nonumber \|u^\varepsilon\|^2_{L^\infty (0,T;L^2(\Omega^\ep))} &+ \|\nabla u^\varepsilon \|^2_{L^2(\oet)}+\|\partial_t u^\varepsilon\|^2_{L^2(0,T;H^{-1}(\Omega^\ep))}\\
\label{eq:PH:eps_estimates}&+ \varepsilon \|v^\varepsilon\|^2_{L^\infty (0,T;L^2(\Gamma_G^\ep))} + \ep \|\partial_t v^\varepsilon \|^2_{L^2(\get)} \leq C,
\end{align}
where the constants $C>0$ and $M > 0$ are independent of $\varepsilon$.
\end{theorem}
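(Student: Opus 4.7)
The existence and uniqueness portion is best handled by direct appeal to \cite{porescale, tycho3}: a weak solution is constructed there by regularizing the Heaviside graph and passing to the limit in the regularization parameter, with the resulting limit satisfying \eqref{eq:PH:w=r}; uniqueness follows from the $L^1$-contraction in Theorem \ref{th:l1}. My effort therefore concentrates on verifying that the a priori estimates stated in the theorem are $\varepsilon$-uniform in the periodically perforated setting, which is the property actually needed for the homogenization carried out later.

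For the pointwise bounds I would first establish non-negativity. Non-negativity of $v^\varepsilon$ follows from the ODE: whenever $v^\varepsilon=0$, relation \eqref{eq:PH:w=r} forces $\partial_t v^\varepsilon = r(u^\varepsilon)-\min\{r(u^\varepsilon),1\}\geq 0$, so $v^\varepsilon$ cannot cross zero from above given $v_I\geq 0$. Then $w^\varepsilon\in[0,1]$ is immediate from the definition of $H$. Testing \eqref{eq:PH:w}$_1$ with the Stampacchia-type truncation $(u^\varepsilon)^-$ and using $\partial_t v^\varepsilon\geq -w^\varepsilon\geq -1$ together with $u_I\geq 0$ yields $u^\varepsilon\geq 0$. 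The upper bound $u^\varepsilon,v^\varepsilon\leq M$ for some $\varepsilon$-independent $M$ comes from a comparison argument: exploiting the monotonicity of $r$ together with $0\leq w^\varepsilon\leq 1$, one checks that constants (or mildly growing affine functions of $t$) exceeding $\max(M_0,u^*)$ act as super-solutions of the coupled system.

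For the energy estimate, the plan is to test \eqref{eq:PH:w}$_1$ with $\phi=u^\varepsilon$. Using $\nabla\cdot\textbf{q}^\varepsilon=0$, the no-slip condition on $\Gamma_G^\varepsilon$ and the boundary conditions \eqref{eq:PH:ic}$_3$--$_4$, the convective term vanishes, leaving
\begin{equation*}
\tfrac12 \tfrac{d}{dt}\|u^\varepsilon\|^2_{L^2(\Omega^\varepsilon)} + D\|\nabla u^\varepsilon\|^2_{L^2(\Omega^\varepsilon)} \;=\; -\varepsilon(\partial_t v^\varepsilon,u^\varepsilon)_{\Gamma_G^\varepsilon}.
\end{equation*}
Since $|\partial_t v^\varepsilon|=|r(u^\varepsilon)-w^\varepsilon|\leq r(M)+1$ pointwise, the right-hand side is controlled via the $\varepsilon$-scaled trace inequality
\begin{equation*}
\varepsilon\|\phi\|^2_{L^2(\Gamma_G^\varepsilon)} \;\leq\; C\bigl(\|\phi\|^2_{L^2(\Omega^\varepsilon)} + \varepsilon^2\|\nabla\phi\|^2_{L^2(\Omega^\varepsilon)}\bigr),
\end{equation*}
which holds with an $\varepsilon$-independent constant by a standard rescaling from $Y$ to the $\varepsilon$-cell. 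Absorbing the gradient contribution into the left-hand side and applying Gronwall's lemma produces the stated $L^\infty(L^2)$ and $L^2(H^1)$ bounds on $u^\varepsilon$, while the pointwise bound on $\partial_t v^\varepsilon$, combined with $|\Gamma_G^\varepsilon|\sim\varepsilon^{-1}$, directly controls $\varepsilon\|\partial_t v^\varepsilon\|^2_{L^2(\Gamma_G^{\varepsilon T})}$; integrating the ODE in time and using $v_I\in H^1(\Omega)$ yields the bound on $\varepsilon\|v^\varepsilon\|^2_{L^\infty(L^2(\Gamma_G^\varepsilon))}$.

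The time-derivative estimate is the most delicate ingredient and where I would be most careful. From the weak formulation, for any $\phi\in H^1_{0,\Gamma_D}(\Omega^\varepsilon)$,
\begin{equation*}
|(\partial_t u^\varepsilon,\phi)_{\Omega^\varepsilon}| \;\leq\; D\|\nabla u^\varepsilon\|\,\|\nabla\phi\| + M_q\|u^\varepsilon\|\,\|\nabla\phi\| + \varepsilon\|\partial_t v^\varepsilon\|_{L^2(\Gamma_G^\varepsilon)}\|\phi\|_{L^2(\Gamma_G^\varepsilon)},
\end{equation*}
and the same $\varepsilon$-scaled trace inequality bounds the boundary term by $C\|\phi\|_{H^1(\Omega^\varepsilon)}$ with a constant independent of $\varepsilon$. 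Squaring, integrating in time and combining with the energy estimate yields the $L^2(H^{-1})$ bound on $\partial_t u^\varepsilon$. The main obstacle in this entire program is precisely keeping all trace and Poincar\'e-type constants $\varepsilon$-independent; this is what forces the geometric assumptions of Section 2 (periodic perforations contained strictly inside the reference cell) and is the reason the bounds are stated with the particular $\varepsilon$-weights appearing in \eqref{eq:PH:eps_estimates}.
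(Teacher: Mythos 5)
The paper does not actually prove this theorem: it imports it wholesale from \cite{porescale,tycho3}, adding only the observation (Remarks 2.12 and 2.14 of \cite{porescale}) that in the periodically perforated geometry the constants are $\varepsilon$-uniform. Your proposal goes further and reconstructs the proof, and the reconstruction is essentially the correct standard one: Stampacchia truncations for the $L^\infty$ bounds, testing with $u^\varepsilon$ plus the $\varepsilon$-scaled trace inequality $\varepsilon\|\phi\|^2_{L^2(\Gamma_G^\varepsilon)}\leq C(\|\phi\|^2_{L^2(\Omega^\varepsilon)}+\varepsilon^2\|\nabla\phi\|^2_{L^2(\Omega^\varepsilon)})$ for the energy estimate, the pointwise bound $|\partial_t v^\varepsilon|\leq r(M)+1$ together with $\varepsilon|\Gamma_G^\varepsilon|\leq C$ for the surface terms, and duality for $\partial_t u^\varepsilon$. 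This is exactly how the cited references obtain \eqref{eq:PH:eps_estimates}, and your closing remark correctly identifies the $\varepsilon$-uniformity of the trace and extension constants as the point that actually matters for the present paper. Two caveats worth recording. First, the sign arguments for $v^\varepsilon\geq 0$ and for the truncation tests cannot be run directly on the weak formulation of Definition \ref{def:weakeps}, because $w^\varepsilon$ is only a selection from the Heaviside graph and the chain rule for $\partial_t v^\varepsilon$ against nonsmooth truncations is not free; in \cite{porescale} these bounds are established for the regularized (Lipschitz) problem and then inherited by the limit, and your write-up should make that ordering explicit. Second, the convective term does not literally vanish when you test with $u^\varepsilon$: the condition on $\Gamma_N$ in \eqref{eq:PH:ic} constrains the total flux, not $\boldsymbol{\nu}\cdot\textbf{q}^\varepsilon$, so a boundary contribution $\tfrac12\int_{\Gamma_N}(\boldsymbol{\nu}\cdot\textbf{q}^\varepsilon)(u^\varepsilon)^2$ survives the integration by parts and must be either absorbed (using \eqref{eq:PH:M_q} and the $L^\infty$ bound) or eliminated by an assumption on the sign of the normal velocity there. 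Neither point invalidates the strategy; both are handled in the cited source.
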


For passing to the limit in the nonlinear reaction terms on $\Gamma_G^\ep$, one needs strong convergence for the solute concentration $u^\ep$. A first step in obtaining this is to extend $u^\varepsilon$ and $\partial_t u^\varepsilon$ from $\Omega^\ep$ to the entire domain $\Omega$. The estimates from Theorem \ref{th:estim1} allow extending $u^\ep$ inside the perforation, but are insufficient for the extension of $\partial_t u^\varepsilon$. In \cite{Jaeger, Maria}, additional estimates for $\partial_t u^\varepsilon$ are obtained by differentiating the model with respect to time. Because of the possible discontinuities in the reaction rate, this approach does not work here.

The approach here is to use the explicit extension procedure in \cite{Jaeger, Maria}, and to estimate the difference quotients with respect to time. With $X$ being a Banach space and for any $h>0$, a function $g: [0,T] \mapsto X$  is extended for negative values of $t$ by
%\textcolor{red}{
$g(t) = g(0)$.
%}
%\marginpar{Is it OK now? }
Recalling (A.3), since the extensions are constant in time, for all $t < 0$ one has
%\textcolor{blue}
%{linear interpolation in time. Specifically, for $t \in (-h, 0]$ we take
%\begin{equation}\label{eq:tneg}
%\begin{array}{rcl}
%u^\varepsilon(t) &=& u_I - t \nabla \cdot \left(\textbf{q}^\varepsilon u_I - D \nabla u_I \right), \\
%v^\varepsilon(t) &=& v_I + t \left(r(u_I) - w_I \right),
%\end{array}
%\end{equation}
%where the initial dissolution rate $w_I$ satisfies \eqref{eq:PH:w=r}. Note that for this extension on has formally
%\begin{equation}\label{eq:tneg_eq}
%$$
%\begin{array}{rcl}
%\partial_t u^\varepsilon(t) +  \nabla \cdot \left(\textbf{q}^\varepsilon u_I \right) = \nabla \cdot(D \nabla u_I) , \; \mbox{ and } \;
%\partial_t v^\varepsilon(t) = \left(r(u_I) - w_I \right).
%\end{array}
%$$
%\end{equation}
\begin{align}
\label{eq:tneg_eq}
\begin{array}{l}
(\partial_t u^\varepsilon,\phi)_{\Omega^{\ep}} + D (\nabla u^\varepsilon , \nabla \phi)_{\Omega^{\ep}} - (\textbf{q}^\ep u^\varepsilon, \nabla \phi )_{\Omega^{\ep}}+ \varepsilon (r(u^\varepsilon)-w^\varepsilon, \phi)_{\gee} \\[0.5em]
 \qquad \qquad = D (\nabla u_I , \nabla \phi)_{\Omega^{\ep}} - (\textbf{q}^\ep u_I, \nabla \phi )_{\Omega^{\ep}} + \varepsilon (r(u_I)-w_I, \phi)_{\gee}, \\[0.5em]
(\partial_t v^\varepsilon, \theta)_{\gee} - (r(u^\varepsilon)-w^\varepsilon, \theta)_{\gee} = - (r(u_I)-w_I, \theta)_{\gee},
  %w^\varepsilon &\in& H(v^\varepsilon) \quad \text{a.e. in} \quad \get,
 \end{array}
 \end{align}
for all $\varphi \in H^1_{0,\Gamma_D}(\Omega^\ep)$ and $\theta \in L^2(\gee)$.
%}
%\textcolor{blue}{
%Assuming that the initial and boundary conditions are compatible, namely that
%$$
%- \textbf{ $\nu$ }
%(D \nabla u_I) = \varepsilon (r(u_I) - w_I) \text{ on } \Gamma^\varepsilon_G,
%$$
%and that the initial condition has sufficient regularity, e.g. $u_I \in H^2(\Omega)$, then \eqref{eq:tneg_eq} holds in $L^2$ sense.
%}

%\textcolor{blue}
{With $g$ being either $u^\varepsilon$ or $v^\varepsilon$ extended to negative times as above, for all $t \geq 0$} we define the difference quotient
\begin{align*}
\triangle_h g(t) := \frac{g(t)-g(t-h)}{h}.
\end{align*}
We have
\begin{lemma} \label{lemma:regularity}
Let $h>0$, $t \in [h, T]$, and $(u^\ep, v^\ep, w^\ep)$ be a weak solution of \eqref{eq:PH:om}-\eqref{eq:PH:w=r} in the sense of Definition \ref{def:weakeps}. %\textcolor{red}{HERE AGAIN THE WHOLE PROBLEM HAS TO BE MENTIONED!KK: I did.}
Then the following estimate is uniform in $h$:
\begin{align*}
\displaystyle \int\limits_{\Omega^\ep} |\triangle_h u^\ep(t,x)| dx +\ep \int\limits_{\Gamma_G^\ep} |\triangle_h v^\ep(t,x)| \leq \int\limits_{\Omega^\ep} |\triangle_h u^\ep(h,x)| dx +\ep  \int\limits_{\Gamma_G^\ep} |\triangle_h v^\ep(h,x)|dx .
\end{align*}
Further, assuming (A.3), a $C > 0$ not depending on $h$ or $\ep$ exists s.t. for any $t \geq 0$ %\marginpar{Here I am still a bit unhappy, I think we should make "compatibility" more precise}
\begin{align}
\label{eq:LipContraction} \displaystyle \int\limits_{\Omega^\ep} |\triangle_h u^\ep(t,x)| dx +\ep  \int\limits_{\Gamma_G^\ep} |\triangle_h v^\ep(t,x)| \leq C.
\end{align}
\end{lemma}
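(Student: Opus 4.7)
The plan is to establish Part 1 as a direct corollary of the $L^1$-contraction Theorem~\ref{th:l1}, and then to bound the right-hand side at $t=h$ by a Kruzhkov-type argument that uses the extended equation \eqref{eq:tneg_eq} and the compatibility assumption in (A.3). For Part 1, note that both $u^\ep(\cdot)$ and the time-shift $\tilde u^\ep(\cdot):=u^\ep(\cdot+h)$ are weak solutions of \eqref{eq:PH:om}-\eqref{eq:PH:w=r} on $[0,T-h]$, the second with initial data $(u^\ep(h),v^\ep(h))$ replacing $(u_I,v_I)$. Applying Theorem~\ref{th:l1} to this pair and dividing by $h$ yields
\[
\int_{\Omega^\ep}|\triangle_h u^\ep(s)|\,dx+\ep\!\int_{\Gamma_G^\ep}|\triangle_h v^\ep(s)|\,ds\leq \int_{\Omega^\ep}|\triangle_h u^\ep(h)|\,dx+\ep\!\int_{\Gamma_G^\ep}|\triangle_h v^\ep(h)|\,ds,
\]
for every $s\in[h,T]$, which is exactly Part 1.

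For Part 2 the task reduces to controlling $\int|\triangle_h u^\ep(h)|+\ep\int|\triangle_h v^\ep(h)|$ uniformly in $\ep,h$, and, more generally, the same quantity for $t\in[0,h]$. For $t\in(0,h)$ the shifted time $\tau=t-h$ is negative, so $u^\ep(\tau)\equiv u_I$ and $v^\ep(\tau)\equiv v_I$ satisfy \eqref{eq:tneg_eq} rather than the original equations. Subtracting \eqref{eq:tneg_eq} at $\tau<0$ from the original weak form \eqref{eq:PH:w} at $t>0$ and dividing by $h$ gives a difference system for $(\triangle_h u^\ep,\triangle_h v^\ep)$ structurally identical to the one used in the proof of Theorem~\ref{th:l1}, except for the extra right-hand term
\[
-\tfrac{1}{h}\bigl[\,D(\nabla u_I,\nabla\phi)_{\Omega^\ep}-(\mathbf{q}^\ep u_I,\nabla\phi)_{\Omega^\ep}+\ep(r(u_I)-w_I,\phi)_{\get}\,\bigr].
\]
Integrating by parts in the first two pieces, using $\nabla\cdot\mathbf{q}^\ep=0$, the outer boundary conditions from \eqref{eq:PH:ic}, and crucially the compatibility condition $-\boldsymbol{\nu} D\nabla u_I=\ep(r(u_I)-w_I)$ on $\Gamma_G^\ep$ from (A.3), the boundary trace on $\Gamma_G^\ep$ exactly cancels the last term. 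The extra source therefore collapses to $-\tfrac{1}{h}(f,\phi)_{\Omega^\ep}$ with $f=\nabla\cdot(\mathbf{q}^\ep u_I-D\nabla u_I)=\mathbf{q}^\ep\cdot\nabla u_I-D\Delta u_I$, which by (A.3) and \eqref{eq:PH:M_q} lies in $L^\alpha(\Omega)\hookrightarrow L^1(\Omega)$ with a norm independent of $\ep$.

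I would then test the PDE part of this augmented difference system with $\phi=\mathrm{sgn}_\delta(\triangle_h u^\ep)\in H^1_{0,\Gamma_D}(\Omega^\ep)$ and the boundary ODE with $\theta=\ep\,\mathrm{sgn}_\delta(\triangle_h v^\ep)$, where $\mathrm{sgn}_\delta$ is a smooth nondecreasing approximation of the sign. Summing the two and passing $\delta\to 0$, the diffusion contribution is nonnegative, the convection term vanishes (via $\nabla\cdot\mathbf{q}^\ep=0$ and the boundary conditions), and the two boundary coupling contributions collapse to $\ep(\triangle_h r(u^\ep)-\triangle_h w^\ep,\mathrm{sgn}(\triangle_h v^\ep)-\mathrm{sgn}(\triangle_h u^\ep))_{\get}$, which is nonpositive by monotonicity of $r$ (assumption (A.2)) and a case analysis on $(v^\ep(t),v^\ep(t-h))$ applied to the selection rule \eqref{eq:PH:w=r}. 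Retaining $-\tfrac{1}{h}(f,\phi)_{\Omega^\ep}$ on the right and integrating from $0$ to $t\in[0,h]$, and noting $\triangle_h u^\ep(0)=\triangle_h v^\ep(0)=0$ by the extension convention, I obtain
\[
\int_{\Omega^\ep}|\triangle_h u^\ep(t)|\,dx+\ep\!\int_{\Gamma_G^\ep}|\triangle_h v^\ep(t)|\,ds\leq\tfrac{t}{h}\,\|f\|_{L^1(\Omega^\ep)}\leq\|f\|_{L^1(\Omega)}\leq C,
\]
uniformly in $\ep,h$, for $t\in[0,h]$. Combined with Part 1 this gives \eqref{eq:LipContraction} for all $t\geq 0$. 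The most delicate point is the case-by-case verification of nonpositivity of the $w^\ep$-coupling, since $w^\ep$ depends simultaneously on $u^\ep$ and $v^\ep$ through the single-valued selection \eqref{eq:PH:w=r}; this is the same ingredient that already underlies Theorem~\ref{th:l1} and may be invoked here without repetition.
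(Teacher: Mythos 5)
Your proposal is essentially correct and, for the delicate part ($t\in[0,h]$), follows the same route as the paper: extend $(u^\ep,v^\ep)$ to negative times by the initial data so that \eqref{eq:tneg_eq} holds, form the difference-quotient system, test with regularized sign functions $\mathcal{S}_\delta(\triangle_h u^\ep)$ and $\ep\,\mathcal{S}_\delta(\triangle_h v^\ep)$, use positivity of the diffusion term, vanishing of the convection term, and monotonicity of $(u,v)\mapsto r(u)-w$ to kill the coupling term, and control the leftover initial-data source using $u_I\in W^{2,\alpha}$. Where you genuinely deviate is Part 1: instead of re-running the Kruzhkov-type argument for the pair $(u^\ep(\cdot),u^\ep(\cdot-h))$ as the paper does, you invoke Theorem \ref{th:l1} as a black box applied to the solution and its time shift (legitimate, since the system is autonomous and $\textbf{q}^\ep$ is time-independent, so $u^\ep(\cdot+h)$ is the weak solution with initial data $(u^\ep(h),v^\ep(h))$). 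This is cleaner and makes the logical dependence on the $L^1$-contraction explicit; the paper's choice to redo the computation is motivated by the fact that the same machinery is needed anyway for $t\in[0,h]$, where Theorem \ref{th:l1} cannot be applied directly.

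One bookkeeping omission: when you subtract \eqref{eq:tneg_eq} from \eqref{eq:PH:w}, the extra source appears not only in the PDE equation but also in the ODE on $\gee$, namely the term $-\tfrac1h\,(r(u_I)-w_I,\theta)_{\gee}$. After the compatibility condition cancels the $\ep(r(u_I)-w_I,\phi)_{\gee}$ contribution tested with $\phi=\mathcal{S}_\delta(\triangle_h u^\ep)$, the ODE source tested with $\theta=\ep\,\mathcal{S}_\delta(\triangle_h v^\ep)$ survives; your claim that the extra source ``collapses to $-\tfrac1h(f,\phi)_{\Omega^\ep}$'' and the resulting bound $\tfrac{t}{h}\|f\|_{L^1}$ miss this contribution. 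It is harmless --- since $t\le h$, $|\mathcal{S}_\delta|\le1$, $r(u_I)-w_I$ is bounded by (A.3), and $\ep\,|\gee|=O(1)$, it adds only another $\ep$- and $h$-uniform constant --- and indeed the paper keeps exactly this term, $\tfrac{\ep}{h}\int_0^t(r(u_I)-w_I,\mathcal{S}_\delta(\triangle_h v^\ep))_{\gee}$, inside $\mathcal{I}_\delta^6$ and bounds it there. You should add this term to your estimate; with that correction the argument is complete.
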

\begin{proof}
For the ease of presentation we start with the case $t >h$, where no extension in time is needed. After proving the first part of the lemma we consider the case $t \in [0, h]$.

We follow the $L^1$ contraction proof of Theorem \ref{th:l1} in \cite{tycho3} and define $\mathcal{T}_\delta, \mathcal{S}_\delta: \mathbb{R} \rightarrow \mathbb{R}$
\begin{align*}
\mathcal{T}_\delta(x) := \left \{
\begin{array}{lll}
-x-\frac \delta 2, \quad \text {if} \quad x < -\delta, \\
\frac {x^2} {2\delta}, \quad \text{if} \quad x \in [-\delta, \delta],\\
x-\frac \delta 2 \quad \text{if} \quad x >\delta,
\end{array}
\right.
%\end{align*}
%\begin{align*}
\qquad \text{ and } \qquad
\mathcal{S}_\delta(x) = \left \{
\begin{array}{lll}
-1, \quad \text{if} \quad x<-\delta,\\
\frac{x}{\delta},\quad \text{if} \quad  x \quad \in [-\delta,\delta],\\
1, \quad \text{if} \quad x > \delta.
\end{array}
\right.
\end{align*}
Here $\delta >0$ is a parameter than can be taken arbitrarily small.
Clearly, $\mathcal{S}_\delta = \mathcal{T}'_\delta$. Note that $\mathcal{T}_\delta$ is a regularized approximation of the absolute value, whereas $\mathcal{S}_\delta$ is the regularized sign function.

Taking $h > 0$ and $t \in (h, T]$ arbitrary, with $(\phi,\theta) \in H^1_{0,\Gamma_D}(\Omega^\varepsilon) \times L^2 (\Gamma_G^{\varepsilon})$ and $\chi_I$ being the characteristic function of the time interval $I$, we test in \eqref{eq:PH:w} first with $\chi_{(h, t)} (\phi, \theta)$, and then with $\chi_{(0, t-h)} (\phi, \theta)$ (both lying in $L^2 (0,T;H_{0,\Gamma_D}^1 (\Omega^\ep)) \times L^2 (\get)$). Subtracting the resulting
gives
\begin{align}\label{eq:PH:wd}
\begin{array}{rcl}
 \displaystyle \int\limits_h^t (\partial_\tau \Delta_h u^\varepsilon,\phi)_{\Omega^\ep} d \tau + D \int\limits_h^t (\nabla \Delta_h u^\varepsilon , \nabla \phi)_{\Omega^{\varepsilon }} d \tau \qquad \qquad & &\\
 \; \; - \displaystyle \int\limits_h^t (\textbf{q} \Delta_h u^\varepsilon, \nabla \phi )_{\Omega^{\varepsilon }} d \tau  + \varepsilon \int\limits_h^t (\partial_t \Delta_h v^\varepsilon, \phi)_{\Gamma_G^{\varepsilon }}, d \tau &=& 0, \\
\displaystyle \int\limits_h^t (\partial_\tau \Delta_h v^\varepsilon, \theta)_{\Gamma_G^\ep} d \tau  - \int\limits_h^t (\Delta_h r(u^\varepsilon)-\Delta_h w^\varepsilon, \theta)_{\Gamma_G^{\varepsilon }} d \tau  &=& 0,  %\\
% \nonumber w^\varepsilon \in H(v^\varepsilon) & & \text{a.e. in} \quad \Gamma_G^\ep.
 \end{array}
 \end{align}
 with $w^\varepsilon \in H(v^\varepsilon) \text{a.e. in} \quad \Gamma_G^\ep.$

%\textcolor{blue}
{A straightforward step allows replacing the last term on the left in \eqref{eq:PH:wd} by $(\Delta_h r(u^\varepsilon)-\Delta_h w^\varepsilon, \phi)_{\Gamma_G^{\varepsilon }}$.}
With $\phi := \mathcal{S}_\delta(\triangle_h u^\varepsilon)$ and $\theta := \varepsilon  \mathcal{S}_\delta(\triangle_h v^\varepsilon)$, adding the two equations \eqref{eq:PH:wd} gives
\begin{eqnarray}
\nonumber & \displaystyle \int\limits_h^t (\triangle_h \partial_t u^\varepsilon, \mathcal{S}_\delta(\triangle_h u^\varepsilon))_{\Omega^\varepsilon} + \varepsilon  (\triangle_h \partial_t v^\varepsilon, \mathcal{S}_\delta(\triangle_h v^\varepsilon))_{\Gamma_G^\varepsilon} dt \\
\label{eq:PH:w2} & + D \displaystyle \int\limits_{h}^t (\nabla \triangle_h u^\varepsilon, \nabla \mathcal{S}_\delta(\triangle_h u^\varepsilon) )_{\Omega^\varepsilon} dt -  \int\limits_{h}^t (\textbf{q}  \triangle_h u^\varepsilon, \nabla \mathcal{S}_\delta(\triangle_h u^\varepsilon) )_{\Omega^\varepsilon} dt \\
\nonumber & +\varepsilon    \displaystyle \int\limits_h^t  (\triangle_h r(u^\varepsilon) - \triangle_h w^\varepsilon,  \mathcal{S}_\delta(\triangle_h u^\varepsilon)-\mathcal{S}_\delta(\triangle_h v^\varepsilon))_{\Gamma_G^\ep} dt =0.
\end{eqnarray}
Denoting the terms above by $\mathcal{I}_\delta^i,i=1,\dots, 5$, we estimate them separately. $\mathcal{I}_\delta^1$ gives
\begin{align*}
\mathcal{I}_\delta^1 = \displaystyle \int\limits_h^t \int\limits_{\Omega^\varepsilon} \partial_\tau \mathcal{T}_\delta (\triangle_h u^\varepsilon(\tau,x))dx d\tau = \int\limits_{\Omega^\varepsilon} \mathcal{T}_\delta(\triangle_h u^\varepsilon(t,x)) dx - \displaystyle \int\limits_{\Omega^\varepsilon} \mathcal{T}_\delta((\triangle_h u^\ep(h, x)))dx.
\end{align*}
Recall that $0 \leq \mathcal{T}_\delta(s)| \leq |s|+\delta/2$ and $u^\varepsilon(t)\in L^2(\Omega^\varepsilon)$, using the dominated convergence theorem,
\begin{align*}
\lim_{\delta \searrow 0} \mathcal{I}_\delta^1 = \displaystyle \int\limits_{\Omega^\varepsilon} |\triangle_h u(t,x)| dx  -
\int\limits_{\Omega^\varepsilon} |\triangle_h u(h,x)| dx .
\end{align*}
In a similar manner,
\begin{align*}
\lim_{\delta \searrow 0} \mathcal{I}_\delta^2 = \displaystyle \ep \int\limits_{\Gamma_G^\varepsilon} |\triangle_h v(t,x)|
-\ep \int\limits_{\Gamma_G^\varepsilon} |\triangle_h v(h,x)|   .
\end{align*}
Next, since $\mathcal{S}{_\delta}' \geq 0$ a.e. on $\mathbb{R}$, one gets
\begin{align*}
 \mathcal{I}_\delta^3 = \frac{D}{2} \displaystyle \int\limits_h^t \int\limits_{\Omega^\varepsilon} S{_\delta}'(\triangle_h u^\ep) |\nabla \triangle_h u^\varepsilon|^2 dxdt \geq 0.
\end{align*}
Furthermore, for $\mathcal{I}_\delta^4$, since $\textbf{q}$ has zero divergence, using the no-slip boundary conditions together with the vanishing trace of $u^\ep$ on $\Gamma_{D}$ one obtains
\begin{align*}
\mathcal{I}_\delta^4 = \displaystyle \int\limits_{h}^t \int\limits_{\Omega^\varepsilon} \nabla \cdot (\textbf{q}  \mathcal{T}_\delta(\triangle_h u)) = \int\limits_{0}^t \int\limits_{\partial \Omega^\varepsilon} \boldsymbol{\nu} \cdot (\textbf{q} \mathcal{T}_\delta(\triangle_h u)) = 0.
\end{align*}
With $f(u^\ep(t,x),v^\ep(t,x)) = r(u^\ep(t,x))-w^\ep(t,x) \in r(u^\ep(t,x))-H(v^\ep(t,x))$, $\mathcal{I}_\delta^5$ becomes
\begin{align*}
\mathcal{I}_\delta^5 = \ep \displaystyle \int\limits_h^t \int\limits_{\Gamma_G^\varepsilon} \left ( f(u^\ep(t,x),v(t,x)) - f(u^\ep(t-h,x), v(t-h,x)) \right ) (\mathcal{S}_\delta(\triangle_h u^\ep)- \mathcal{S}_\delta(\triangle_h v^\ep)) dx dt.
\end{align*}
Due to the a priori estimates on $u^\ep$ and $v^\ep$ and since $\mathcal{S}_\delta$ is bounded, the integration argument in $\mathcal{I}_\delta^5$ is uniformly dominated in $L^1(\get)$. Therefore, for obtaining uniform estimates for $|\triangle_h u(t,x)|$, it is sufficient to prove that
$$
\lim_{\delta \searrow 0}  \left ( f(u^\ep(t,x),v^\ep(t,x)) - f(u^\ep(t-h,x), v^\ep(t-h,x)) \right ) (\mathcal{S}_\delta(\triangle_h u^\ep)- \mathcal{S}_\delta(\triangle_h v^\ep)) \geq 0
$$
a.e. on $\Gamma_G^{\varepsilon, T}$. This depends on the sign of the difference quotients $\triangle_h u^\ep$ and $\triangle_h v^\ep$. Without loss of generality we only consider the case when $\triangle_h u^\ep \geq 0$, the proof for $\triangle_h u^\ep < 0$ being similar.

Given a pair $(t, x) \in \Gamma_G^{\varepsilon, T}$, we note that if $\triangle_h u^\ep > 0$ and $\triangle_h v^\ep > 0$ one has
$$
\lim_{\delta \searrow 0} (\mathcal{S}_\delta(\triangle_h u^\ep)- \mathcal{S}_\delta(\triangle_h v^\ep)) \rightarrow 0.
$$
The situation is similar if $\triangle_h u^\ep \geq 0$ and $\triangle_h v^\ep \leq 0$. Then we use the monotonicity of $f$ with respect to $u^\ep$ and $v^\ep$ (see also Lemma 1 in \cite{tycho3}) to obtain
$$
f(u^\ep(t,x),v^\ep(t,x)) - f(u^\ep(t-h,x), v^\ep(t-h,x))  \geq 0.
$$
Since $\mathcal{S}_\delta(\triangle_h u^\ep) \geq 0 \geq \mathcal{S}_\delta(\triangle_h v^\ep) $, we have
$$
\lim_{\delta \searrow 0}  \left ( f(u^\ep(t,x),v^\ep(t,x)) - f(u^\ep(t-h,x), v^\ep(t-h,x)) \right ) (\mathcal{S}_\delta(\triangle_h u^\ep)- \mathcal{S}_\delta(\triangle_h v^\ep)) \geq 0.
$$

Using the estimates above into \eqref{eq:PH:w2} gives
\begin{align*}
\displaystyle \int\limits_{\Omega^\ep} |\triangle_h u^\ep(t, x)| dx +\ep \int\limits_{\Gamma_G^\ep} |\triangle_h v^\ep(t, x)| \leq \int\limits_{\Omega^\ep} |\triangle_h u^\ep(h,x)| dx +\ep \int\limits_{\Gamma_G^\ep} |\triangle_h v^\ep(h,x)|dx
\end{align*}
uniformly in $h$.

Finally, we consider the case $t \in [0, h]$. Since the extensions of $u^\ep$, $v^\ep$ and $w^\ep$ satisfy \eqref{eq:tneg_eq}, the steps carried out for $t > h$ lead to
\begin{eqnarray}
\nonumber & \displaystyle \int\limits_0^t (\triangle_h \partial_t u^\varepsilon, \mathcal{S}_\delta(\triangle_h u^\varepsilon))_{\Omega^\varepsilon} + \varepsilon  (\triangle_h \partial_t v^\varepsilon, \mathcal{S}_\delta(\triangle_h v^\varepsilon))_{\Gamma_G^\varepsilon} dt \\
\label{eq:PH:w20} & + D \displaystyle \int\limits_{0}^t (\nabla \triangle_h u^\varepsilon, \nabla \mathcal{S}_\delta(\triangle_h u^\varepsilon) )_{\Omega^\varepsilon} dt -  \int\limits_{0}^t (\textbf{q}  \triangle_h u^\varepsilon, \nabla \mathcal{S}_\delta(\triangle_h u^\varepsilon) )_{\Omega^\varepsilon} dt \\
\nonumber & +\varepsilon    \displaystyle \int\limits_0^t  (\triangle_h r(u^\varepsilon) - \triangle_h w^\varepsilon,  \mathcal{S}_\delta(\triangle_h u^\varepsilon)-\mathcal{S}_\delta(\triangle_h v^\varepsilon))_{\Gamma_G^\ep} dt \\
\nonumber & = \displaystyle - \frac D h \int\limits_0^t (\nabla u_I, \nabla \mathcal{S}_\delta(\triangle_h u^\varepsilon) )_{\Omega^\varepsilon} dt
+ \frac 1 h \int\limits_{0}^t (\textbf{q}  u_I, \nabla \mathcal{S}_\delta(\triangle_h u^\varepsilon) dt \\
\nonumber & \displaystyle
+\frac \varepsilon  h \int\limits_0^t  (r(u_I) - w_I,  \mathcal{S}_\delta(\triangle_h v^\varepsilon) - \mathcal{S}_\delta(\triangle_h u^\varepsilon))_{\Gamma_G^\ep} dt \; =: \mathcal{I}_\delta^6 .
\end{eqnarray}
In view of the boundary conditions for both $u_I$ and $\textbf{q}$ and since $\nabla \cdot \textbf{q} = 0$,  $\mathcal{I}_\delta^6$ rewrites
$$
\mathcal{I}_\delta^6 = \displaystyle \frac D h \int\limits_0^t (\Delta u_I, \mathcal{S}_\delta(\triangle_h u^\varepsilon) )_{\Omega^\varepsilon} dt
- \frac 1 h \int\limits_{0}^t (\textbf{q}  \nabla u_I, \mathcal{S}_\delta(\triangle_h u^\varepsilon) dt %\\
%\nonumber & \displaystyle
+\frac \varepsilon  h \int\limits_0^t  (r(u_I) - w_I,  \mathcal{S}_\delta(\triangle_h v^\varepsilon))_{\Gamma_G^\ep} dt .
$$
Using the fact that $u_I \in W^{2, \alpha}$ and the boundedness of the initial condition, of the function $\mathcal{S}\delta$ and of $\textbf{q}$, since $t \leq h$ it follows immediately that a $C > 0$ exists, depending on the initial data but not on $h$ or $\delta$, such that $| \mathcal{I}_\delta^6 | \leq C$ (thus uniformly w.r.t. $h$ and $\delta$). Now proceeding exactly as in the case $t > h$ one obtains
%If the initial and boundary conditions are assumed compatible\textcolor{red}{ (e.g. initial data $u_I \in W^{2,\alpha}, \alpha>1$)}, \eqref{eq:PH:w} also holds for negative time arguments, and the estimate can be extended for $t \in [0, h)$.
%\marginpar{ Is it OK now?}
\begin{align}
\label{eq:PH:imp_es} \displaystyle \int\limits_{\Omega^\ep} |\triangle_h u^\ep| dx +\ep \int\limits_{\Gamma_G^\ep} |\triangle_h v^\ep| \leq C .
\end{align}
%which proves the lemma.
\end{proof}
\begin{remark}
The fact that $\triangle_h u^\ep$ is bounded uniformly with respect to $h$ in $L^1(\Omega^{\varepsilon})$ norm does not imply $\partial_t u^\varepsilon \in L^1((-h,T) \times {\Omega^\varepsilon})$, since $L^1$ is not reflexive. However, as we will show below, this uniform estimate is sufficient to construct an extension of $u^\varepsilon$ to $\Omega$ having sufficient regularity in time.
\end{remark}
\subsection{Extension results}
In this section, we construct an extension of $u^\ep$ from the $\ep$-dependent domain $(0,T)\times \Omega^\ep$ to the fixed domain $(0,T)\times \Omega$. The difficulty which we have to face here is the fact that the time-derivative of the extended function can not be controlled easily due to the low regularity with respect to time of the original function. Thus, to get the required regularity for the extension of the time derivative, we have to use the special properties of our microscopic solutions, see \eqref{eq:PH:imp_es}.

For $u \in L^1((0,T)\times Y)$, we define the mean value $m_u: (0,T) \rightarrow \mathbb{R}$ as follows
\begin{equation}
m_u(t) := \frac{1}{|Y|} \int_Y u(t,y)dy.
\end{equation}
%%%%%%%%%%%%%%%%%%%%%%%%%%%%%%%%%
\begin{lemma} \label{lemma:intermediate}
Let $u \in L^2(0,T;H^{1}(Y)) \cap L^\infty(0,T; L^2(Y))$ satisfying
\begin{equation}\label{difquotu}
\int_Y \left |\frac{u(t,y)-u(t-h,y)}{h}\right | dy \leq C,
\end{equation}
for all $0<h<\frac{T}{4}$, and $t \in (h, T)$. Then $m_u \in W^{1, \infty}(0,T)$, with $||\partial_t m_u||_{L^\infty(0,T)}\leq C$.
\end{lemma}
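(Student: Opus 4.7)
The plan is to exploit the fact that the averaging operator $u \mapsto m_u$ commutes with difference quotients in time, so the hypothesis transfers directly to a Lipschitz bound on $m_u$. First I would note that $m_u$ is well-defined as an element of $L^\infty(0,T)$, since by Cauchy--Schwarz
\begin{equation*}
|m_u(t)| \le |Y|^{-1/2} \|u(t,\cdot)\|_{L^2(Y)},
\end{equation*}
and the right-hand side is essentially bounded by assumption on $u$.

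Next, I would apply Fubini/linearity and the hypothesis (\ref{difquotu}) directly. For every $0<h<T/4$ and a.e.\ $t \in (h,T)$,
\begin{equation*}
\left| \frac{m_u(t)-m_u(t-h)}{h} \right|
= \left| \frac{1}{|Y|} \int_Y \frac{u(t,y)-u(t-h,y)}{h}\,dy \right|
\le \frac{1}{|Y|} \int_Y \left| \frac{u(t,y)-u(t-h,y)}{h} \right|\,dy \le \frac{C}{|Y|}.
\end{equation*}
Thus the finite differences of $m_u$ are bounded by $C/|Y|$, uniformly in $h$.

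The remaining step is to pass from this uniform difference-quotient bound to Lipschitz regularity of a representative of $m_u$. I would argue via Lebesgue points: any two Lebesgue points $t_1 < t_2$ of $m_u$ in $(0,T)$ can be joined by chains of increments of length $h = (t_2-t_1)/N$ (for $N$ large enough that $h<T/4$), and the telescoping estimate
\begin{equation*}
|m_u(t_2)-m_u(t_1)| \le \sum_{k=1}^N |m_u(t_1+kh) - m_u(t_1+(k-1)h)| \le N \cdot \frac{C h}{|Y|} = \frac{C|t_2-t_1|}{|Y|}
\end{equation*}
gives Lipschitz continuity on the set of Lebesgue points, which extends uniquely to a Lipschitz function on $[0,T]$. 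Since Lipschitz functions are absolutely continuous and in $W^{1,\infty}$, this yields $m_u \in W^{1,\infty}(0,T)$ with $\|\partial_t m_u\|_{L^\infty(0,T)} \le C/|Y| \le C$ (after absorbing $|Y|$ into the constant).

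The calculation itself is routine; the only mildly delicate point is the final passage from bounded difference quotients to pointwise Lipschitz regularity of a representative, but this is a standard real-analysis fact and poses no serious obstacle. The essential content of the lemma is simply that integration against $1/|Y|$ is a bounded linear functional on $L^1(Y)$, so the $L^1$-in-$y$ control of the difference quotient given by the hypothesis immediately yields the scalar-in-$t$ Lipschitz bound.
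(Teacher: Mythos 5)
Your proposal is correct and follows essentially the same route as the paper's proof: first bound $m_u$ in $L^\infty(0,T)$ using $u \in L^\infty(0,T;L^2(Y))$, then pull the difference quotient inside the cell average so that the hypothesis transfers verbatim to a uniform bound on the difference quotients of $m_u$, and finally conclude $W^{1,\infty}$ regularity. The only difference is that where the paper simply cites the standard difference-quotient lemma (Gilbarg--Trudinger, Lemma 7.24), you prove that final real-analysis fact by hand via telescoping between Lebesgue points; this is fine, with the small caveat that the a.e.\ exceptional set for the bound may depend on $h$, so the chain points must be chosen with a little care (or one argues distributionally, which is precisely what the cited lemma packages).
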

\begin{proof} %(Lemma \ref{lemma:intermediate})
Since $u \in L^\infty (0,T; L^2(Y))$, it follows immediately that $m_u \in L^\infty (0,T)$. Furthermore, due to \eqref{difquotu}, we have for a.e. $t\in (h,T)$
$$ \left | \frac{m_u(t)-m_u(t-h)}{h}\right |  = \frac{1}{ |Y|} \left | \int_Y \frac{u(t,y) - u(t-h,y)}{h} dy \right | \leq \frac{1}{|Y|} \int_Y \left |\frac{u(t,y)-u(t-h,y)}{h}\right | dy \leq C
$$
with $C$ independent of $h$. Using the properties of difference quotients in Sobolev spaces, see e.g. \cite{GT_Book}, Lemma 7.24, we conclude that $\partial_t m_u \in L^\infty(0,T)$, and $||\partial_t m_u||_{L^\infty(0,T)}\leq C$.
\end{proof}

%%%%%%%%%%%%%%%%%%%%%%%%%%%%%%%%%%
\begin{lemma} \label{lemma:ext1}
Let $u \in L^2(0,T;H^{1}(Y))$, then there exists an extension $\tilde{u} \in L^2(0,T;H^{1}(Z))$ of $u$, such that
\begin{eqnarray}
&& ||\tilde{u}||_{L^2(0,T;H^{1}(Z))} \leq C ||u||_{L^2(0,T;H^{1}(Y))} \label{extest1}
%&& || \partial_t \tilde{u} ||_{L^2(0,T;H^{-1}(Z))} \leq C ||\partial_t u||_{L^2(0,T;H^{-1}(Y))} \label{extest2}.
\end{eqnarray}
\end{lemma}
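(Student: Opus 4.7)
The plan is to build $\tilde{u}$ by applying, pointwise in time, a fixed bounded linear extension operator $E:H^1(Y)\to H^1(Z)$, and then collect the $t$-dependent estimates via integration. Since the construction of $E$ is purely spatial and independent of $t$, all time regularity of $u$ is preserved automatically.

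First I would construct $E$. The set $Y_0$ is compactly contained in $Z$ with piecewise smooth boundary $\Gamma_G$, so $Y = Z \setminus \bar{Y}_0$ is a Lipschitz domain. By the classical Calder\'on--Stein extension theorem there exists a bounded linear operator $P:H^1(Y)\to H^1(\mathbb{R}^3)$ with $(Pu)|_Y = u$ and $\|Pu\|_{H^1(\mathbb{R}^3)}\le C\|u\|_{H^1(Y)}$; restriction to $Z$ yields $E$ with the required properties. A more hands-on alternative, which is enough here, is to set $Eu := u$ on $Y$ and $Eu := w$ on $Y_0$, where $w \in H^1(Y_0)$ is the (weak) solution of the Dirichlet problem $-\Delta w = 0$ in $Y_0$, $w = \mathrm{tr}_{\Gamma_G} u$ on $\Gamma_G$. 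The trace theorem gives $\|\mathrm{tr}_{\Gamma_G} u\|_{H^{1/2}(\Gamma_G)} \le C\|u\|_{H^1(Y)}$, and elliptic regularity for the Laplacian on the smooth bounded domain $Y_0$ gives $\|w\|_{H^1(Y_0)}\le C\|\mathrm{tr}_{\Gamma_G}u\|_{H^{1/2}(\Gamma_G)}$. Matching traces across $\Gamma_G$ ensures $Eu \in H^1(Z)$, and combining the two bounds gives $\|Eu\|_{H^1(Z)}\le C_E\|u\|_{H^1(Y)}$ with $C_E$ depending only on the reference geometry.

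Next, for $u\in L^2(0,T;H^1(Y))$, define $\tilde{u}(t,\cdot) := E(u(t,\cdot))$ for a.e.\ $t\in(0,T)$. Linearity and boundedness of $E$ make $t\mapsto \tilde{u}(t,\cdot)$ Bochner measurable from $(0,T)$ into $H^1(Z)$, and the pointwise estimate
\begin{equation*}
\|\tilde{u}(t,\cdot)\|_{H^1(Z)}^2 \le C_E^2\,\|u(t,\cdot)\|_{H^1(Y)}^2
\end{equation*}
holds for a.e.\ $t$. Integrating in time yields \eqref{extest1}.

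The only genuinely nontrivial step is the existence of $E$, and even that is purely classical: all the work sits in checking that $Y$ is a Lipschitz domain (so that standard extension and trace theorems apply). I do not anticipate any obstacle beyond that; the real difficulty of the paper is the companion statement about extending $\partial_t u^\varepsilon$, where the $L^1$-in-time difference-quotient estimate \eqref{eq:PH:imp_es} must be exploited, and that is handled in the subsequent lemmas rather than here.
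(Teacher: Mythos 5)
Your argument is correct for the lemma as literally stated: any bounded linear extension operator $E:H^1(Y)\to H^1(Z)$, applied pointwise in time, yields a Bochner-measurable extension satisfying \eqref{extest1}. However, the paper deliberately does \emph{not} use an abstract Calder\'on--Stein or harmonic extension; it builds the explicit extension $\tilde{u}=(1-\psi)(u^*-m_u)+m_u$, where $u^*$ is a reflection across a tubular neighborhood of $\Gamma_G$ and $m_u$ is the spatial mean over $Y$. This specific form buys two things that your generic operator does not automatically provide. First, subtracting the mean value and invoking Poincar\'e--Wirtinger gives the \emph{separated} estimates $\|\tilde u\|_{L^2(Z)}\le C\|u\|_{L^2(Y)}$ and $\|\nabla\tilde u\|_{L^2(Z)}\le C\|\nabla u\|_{L^2(Y)}$, which are what the ``standard scaling argument'' in Lemma \ref{lemma:ext2} actually needs: after rescaling $y=x/\ep$ the gradient picks up a factor $\ep^{-1}$, so the combined bound \eqref{extest1} alone would produce an $\ep$-dependent constant. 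Your harmonic-extension variant happens to preserve constants and so would also deliver the separated estimates after an extra Poincar\'e--Wirtinger step, but you neither state nor use this, and a raw Stein operator need not preserve constants. Second, and more importantly, Lemma \ref{lemma:ext3} extends $\partial_t u^\ep$ by exploiting the explicit decomposition $(1-\psi)u^*+\psi m_u$ together with the $W^{1,\infty}(0,T)$ bound on $m_u$ from Lemma \ref{lemma:intermediate}; since $\partial_t u^\ep$ lives only in $L^2(0,T;H^{-1}(\Omega^\ep))$, one cannot simply apply a spatial extension operator to it, so the structure of $\tilde u$ fixed here is load-bearing for the next lemma. In short: your proof closes this lemma but would leave the subsequent extension of the time derivative without the construction it relies on, which is precisely why the paper takes the longer route.
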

\begin{proof}
For $t \in (0,T)$, we consider the $H^1$-extension $\tilde{u}(t, \cdot)$ of $u(t, \cdot)$ constructed in \cite{Jaeger}. More precisely, first we extend $u$ into a neighborhood $U$ of $\partial Y_0$ as follows: Using the regularity of $\partial Y_0$, we construct the tubular neighborhood
\begin{eqnarray}
\Phi : \partial Y_0 \times (-\rho,\rho) & \rightarrow & U\\
(\bar{y},\lambda) & \rightarrow & y.
\end{eqnarray}
Using this coordinate transform, we construct an extension of $u$ by reflection:
\[ u^*(y,t) = u^*(\Phi(\bar{y},\lambda),t) =  \left \{ \begin{array}{ccc} u (\Phi(\bar{y}, \lambda),t) & \lambda \geq 0 \\
u (\Phi(\bar{y},-\lambda),t) & \lambda < 0 \end{array} \right. \]
and extend $u^*$ further into $Z$ in any smooth manner. Then, let $\psi : Z  \rightarrow  [0,1]$ be a smooth function with compact support in $Y_0$ and $\psi \equiv 1$ in $Y_0 \setminus U$.
With $m_u$ defined in (\ref{lemma:intermediate}), we define
\begin{equation}\label{tildeu}
\tilde{u} := (1-\psi)(u^* -m_u) + m_u.
\end{equation}
Obviously $\tilde{u}$ is an extension of $u$. To show that $\tilde{u} \in L^2(0,T;H^1(Z))$, we need to prove that for a.e. $t$, $\tilde{u} : [0,T] \rightarrow  H^1(Z)$ is measurable. To do so, let $s^k$ be a sequence of simple functions converging to ${u}$ for a.e. $t$ (as $H^1(Y)$ elements). Extending each $s^k$ to $\tilde{s}^k$ by the procedure in \eqref{tildeu}, the a.e. convergence of $\tilde{s}^k$ to $\tilde{u}$ (now as $H^1(Z)$ elements) will still hold. Finally, by using Lemma 5 in \cite{Jaeger}, we conclude that $\tilde{u} \in L^2(0,T;H^1(Z))$ and \eqref{extest1} is satisfied.
This proves the lemma.
\end{proof}

%%%%%%%%%%%%%%%%%%%%%%%%%%%%%%%%%%
\begin{lemma} \label{lemma:ext2}
Let  $ u^\ep \in L^2(0,T;H^{1}(\Omega^\ep)) \cap L^\infty(0,T;L^2(\Omega^\ep))$, %$\partial_t u^\ep \in  L^2(0,T;H^{1}(\Omega^\ep))$ and $u^\ep$ satisfies \eqref{eq:imp_es}.
then there exists an extension $\tilde{u}^\ep \in L^2(0,T;H^{1}(\Omega))$  of $u^\ep$ such that
\[ ||\tilde{u} ||_{L^2(0,T;H^{1}(\Omega))} \leq ||u^\ep||_{L^2(0,T;H^{1}(\Omega^\ep))}.\]
\end{lemma}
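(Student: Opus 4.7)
The plan is to construct $\tilde{u}^\ep$ by a cell-by-cell application of Lemma \ref{lemma:ext1}, exploiting the decomposition $\bar{\Omega} = \bigcup_{k \in I^\ep} \ep \overline{Z^k}$ with fluid parts $\ep Y^k$ and perforations $\ep Y_0^k$. First, for each index $k \in I^\ep$ and a.e. $t \in (0,T)$, rescale the restriction of $u^\ep$ to $\ep Y^k$ by setting $u_k^\ep(t,y) := u^\ep(t, \ep(y+k))$ for $y \in Y$. Then $u_k^\ep(t,\cdot) \in H^1(Y)$, and Lemma \ref{lemma:ext1} produces an extension $\tilde{u}_k^\ep(t,\cdot) \in H^1(Z)$ satisfying
$$\|\tilde{u}_k^\ep(t,\cdot)\|_{H^1(Z)} \leq C \, \|u_k^\ep(t,\cdot)\|_{H^1(Y)},$$
with a constant $C$ independent of $k$, $\ep$, and $t$. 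Measurability of $t \mapsto \tilde{u}_k^\ep(t,\cdot)$ with values in $H^1(Z)$ is inherited from the measurability of $u_k^\ep$, exactly as in the last part of the proof of Lemma \ref{lemma:ext1}.

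Next, I would glue the rescaled cell extensions by defining $\tilde{u}^\ep(t,x) := \tilde{u}_k^\ep\bigl(t, x/\ep - k\bigr)$ for $x \in \ep Z^k$. The key observation making this globally $H^1$ is that the construction \eqref{tildeu} uses a cut-off $\psi$ compactly supported in $Y_0$, and since $\overline{Y}_0 \subset Z$ the function $\psi$ vanishes in a neighborhood of $\partial Z$. Consequently $\tilde{u}_k^\ep = u_k^\ep$ on $\partial Z$, so on each interior face of two adjacent scaled cells both local extensions agree with the original trace of $u^\ep$; hence $\tilde{u}^\ep(t,\cdot) \in H^1(\Omega)$ with no spurious jumps across cell interfaces, and it clearly extends $u^\ep$.

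For the global estimate, I use the standard $\ep$-scaling identities
$$\|f\|_{L^2(\ep A)}^2 = \ep^3 \|f(\ep \cdot + \ep k)\|_{L^2(A)}^2, \qquad \|\nabla f\|_{L^2(\ep A)}^2 = \ep \,\|\nabla(f(\ep \cdot + \ep k))\|_{L^2(A)}^2,$$
applied with $A \in \{Y, Z\}$. Since the extension of \cite{Jaeger} underlying Lemma \ref{lemma:ext1} actually satisfies the two separate estimates $\|\tilde{u}_k^\ep\|_{L^2(Z)} \leq C \|u_k^\ep\|_{L^2(Y)}$ and $\|\nabla \tilde{u}_k^\ep\|_{L^2(Z)} \leq C \|\nabla u_k^\ep\|_{L^2(Y)}$, the $\ep$-weights on the $L^2$ and gradient parts transform consistently when we scale back and sum over $k \in I^\ep$. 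Integrating in $t$ and summing the cell contributions yields the desired $\ep$-uniform bound $\|\tilde{u}^\ep\|_{L^2(0,T;H^1(\Omega))} \leq C \|u^\ep\|_{L^2(0,T;H^1(\Omega^\ep))}$.

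The point that requires the most care is the compatibility across cell boundaries (ensuring global $H^1$-regularity) and the $\ep$-independence of the constant: without the fact that $\psi$ vanishes near $\partial Z$, the cell extensions could be incompatible, and without the scale-invariant form of the Jäger estimate the $\ep^{-3}$ growth in the number of cells would not cancel with the $\ep^3$ Jacobian factors. Both issues are resolved simultaneously by using the explicit construction \eqref{tildeu} rather than merely the abstract bound \eqref{extest1}. The $L^\infty(0,T;L^2(\Omega^\ep))$ hypothesis is not needed for this particular estimate, but it will be essential for the forthcoming extension of the time derivative, where $m_{u^\ep}$ must be controlled pointwise in $t$ via Lemma \ref{lemma:intermediate}.
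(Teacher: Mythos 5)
Your proof is correct and follows exactly the route the paper intends: its own proof of Lemma \ref{lemma:ext2} consists only of the sentence ``use \eqref{extest1} together with a standard scaling argument, see \cite{Jaeger}'', and your cell-by-cell rescaling, gluing via the vanishing of $\psi$ near $\partial Z$, and the separate $\ep^3$/$\ep$ scaling of the $L^2$ and gradient parts is precisely that standard argument spelled out. You also correctly identify the two points the terse reference hides, namely the trace compatibility across cell interfaces and the role of subtracting $m_u$ in making the gradient bound scale-invariant (via Poincar\'e--Wirtinger), so that the constant stays uniform in $\ep$; note only that the constant $C$ is genuinely needed in the final inequality, even though the paper's statement omits it.
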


\begin{proof} %(Lemma \ref{lemma:ext2})
We use \eqref{extest1} together with a standard scaling argument. For details, see \cite{Jaeger}.
%\begin{eqnarray}
%||\tilde{u}^\ep||_{L^2(0,T; H^1(\Omega))}^2 &=& \displaystyle \int_0^T \left \{  \sum_{\ep Z^k \subset \Omega} \int_{\ep Z^k} \left ( |\tilde{u}^\ep(x)|^2 + |\nabla \tilde{u}^\ep (x)|^2 \right ) dx \right \}dt \\
%&=& \displaystyle \int_0^T  \left \{ \sum_{\ep Z^k \subset \Omega} \int_{Z^k} \left ( |\tilde{u}^\ep(\ep y)|^2+\ep^{-2} |\nabla_y \tilde{u}^\ep(\ep y)|^2 \right ) dy \right \}  dt \nonumber\\
%&\leq& C \displaystyle \int_0^T  \left \{ \sum_{\ep Z^k \subset \Omega} \int_{Y^k} \left ( |u^\ep(\ep y)|^2 + \ep^{-2} |\nabla_y u^\ep(\ep y)|^2 \right ) dy \right \} dt \nonumber \\
%&=& C \displaystyle \int_0^T  \left \{ \sum_{\ep Z^k \subset \Omega} \int_{Y^k} \left ( |u^\ep(x)|^2 + |\nabla_y u^\ep(x)|^2 \right ) dy \right \} dt \nonumber\\
%&=& C ||u^\ep||_{L^2(0,T;H^1(\Omega^\ep))}^2\nonumber
%\end{eqnarray}
%and the lemma is proved.
\end{proof}

\begin{lemma}\label{lemma:ext3}
Let $\partial_t u^\ep \in L^2(0,T;H^{-1}(\Omega^\ep))$ and $u^\ep$ satisfies \eqref{eq:LipContraction} for all $0<h<\frac{T}{4}$, and $t \in (h, T)$, then there exists an extension $\partial_t \tilde{u}^\ep \in L^2(0,T;H^{-1}(\Omega)) $ of $\partial_t u^\ep$ such that
%\marginpar{Again, why $T/4$?}
\[ ||\partial_t \tilde{u}^\ep||_{L^2(0,T;H^{-1}(\Omega))} \leq C ||\partial_t u^\ep||_{L^2(0,T;H^{-1}(\Omega^{\varepsilon }))}.\]
\end{lemma}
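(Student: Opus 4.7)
The plan is to work directly from the explicit cell-by-cell formula \eqref{tildeu} that defines the extension $\tilde{u}^\ep$ on $\Omega$, differentiate it distributionally in time, and bound each resulting piece separately in $L^2(0,T;H^{-1}(\Omega))$. Writing the extension on each $\ep$-scaled cell in the equivalent form $\tilde{u}^\ep = (1-\psi_\ep)(u^\ep)^* + \psi_\ep\, m_{u^\ep}^\ep$, where $\psi_\ep$ is the scaled cutoff, $(u^\ep)^*$ the reflection of $u^\ep$ across $\Gamma_G^\ep$, and $m_{u^\ep}^\ep$ the cell-wise spatial mean, and using that $\psi_\ep$ does not depend on $t$, one obtains
\[\partial_t \tilde{u}^\ep \;=\; (1-\psi_\ep)\,\partial_t (u^\ep)^* \;+\; \psi_\ep\, \partial_t m_{u^\ep}^\ep .\]

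The second summand is the easy one. After rescaling each cell to the reference $Y$, the hypothesis \eqref{eq:LipContraction} is exactly the hypothesis \eqref{difquotu} of Lemma \ref{lemma:intermediate}, so $\partial_t m_{u^\ep}^\ep$ is uniformly bounded in $L^\infty(0,T)$ independently of $\ep$ and of the cell index. Combined with $0\leq \psi_\ep \leq 1$ this shows $\psi_\ep \partial_t m_{u^\ep}^\ep \in L^\infty((0,T)\times \Omega)$, and the continuous embedding $L^2(\Omega) \hookrightarrow H^{-1}(\Omega)$ (valid since $\Omega$ is bounded) supplies a bound of the desired form.

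For the first summand I would argue entirely by duality. Given $\phi \in H^1_{0,\Gamma_D}(\Omega)$, the reflection construction of Lemma \ref{lemma:ext1} applied to $(1-\psi_\ep)\phi$ (which vanishes on the bulk of each perforation) produces, after the same kind of cell-by-cell composition used in Lemma \ref{lemma:ext2}, a function in $H^1_{0,\Gamma_D}(\Omega^\ep)$ with norm bounded by $C\|\phi\|_{H^1(\Omega)}$, $C$ independent of $\ep$. The pairing $\langle (1-\psi_\ep)\partial_t(u^\ep)^*, \phi\rangle_\Omega$ then rewrites, via a change of variables on each cell under the reflection map, as a pairing of $\partial_t u^\ep$ against this new test function on $\Omega^\ep$, yielding the pointwise-in-$t$ estimate $\|(1-\psi_\ep)\partial_t(u^\ep)^*(t)\|_{H^{-1}(\Omega)} \le C \|\partial_t u^\ep(t)\|_{H^{-1}(\Omega^\ep)}$. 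Integrating in $t$ and combining with the previous paragraph completes the proof.

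The main obstacle is precisely this first summand: because $\partial_t u^\ep$ lives only in the dual space $H^{-1}(\Omega^\ep)$, it admits no pointwise extension by reflection, and the whole construction must be phrased through the dual of an $\ep$-uniform extension operator acting on $H^1$-test functions rather than on $u^\ep$ itself. Lemma \ref{lemma:intermediate} is what makes the splitting above work: it converts the low-regularity hypothesis \eqref{eq:LipContraction} into a genuinely pointwise bound on $\partial_t m_{u^\ep}^\ep$, so that the one piece of $\partial_t \tilde{u}^\ep$ that is not accessible by pure duality (namely the one responsible for the inner values in the perforations) can nevertheless be controlled by soft arguments.
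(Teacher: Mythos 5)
Your proposal follows essentially the same route as the paper: there, too, $\partial_t \tilde{u}^\ep$ is defined distributionally and the pairing is split according to the two pieces of the extension formula \eqref{tildeu} --- the fluid/reflected part, handled by duality against $\partial_t u^\ep \in L^2(0,T;H^{-1}(\Omega^\ep))$ after transporting the test function back onto $\Omega^\ep$, and the cell-mean part, handled via the $L^\infty$ bound on $\partial_t m_u$ supplied by Lemma \ref{lemma:intermediate} under hypothesis \eqref{eq:LipContraction}. The only cosmetic differences are that you phrase the mean-value contribution through the embedding $L^2(\Omega) \hookrightarrow H^{-1}(\Omega)$ where the paper estimates the corresponding integrals directly, and that (like the paper) your argument really yields the $\ep$-uniform bound with an additive constant coming from \eqref{eq:LipContraction} on top of the multiplicative one stated in the lemma, which is all that is needed downstream.
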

\begin{proof}
Using the improved regularity of $ u^\ep$ with respect to time (see \eqref{difquotu}), we analyze the time derivative of the extension $\tilde{u}^\ep$.
With  $\partial_t m_u$ obtained from Lemma \ref{lemma:intermediate} we define the functional $\partial_t \tilde{u}^\ep \in L^2(0, T; H^{-1}(\Omega))$ by
\begin{equation}\label{partialtutildedistr}
\left < \partial_t \tilde{u}^\ep, \varphi \tilde{\psi} \right >_{\Omega} =
- \int_0^T \int_{\Omega} \tilde{u}^\ep \partial_t \varphi \tilde{\psi} \ dx dt,
\end{equation}
for all $\varphi \in C^\infty_0(0,T)$ and $\tilde{\psi} \in H^1(\Omega)$. By the definition of $\tilde{u}^\ep$, this rewrites
%\begin{equation}\label{partialtutildedistr}
%\left < \partial_t \tilde{u}^\ep, \varphi \right >_\Omega = - \int_0^T \sum_{\ep Z^k \subset \Omega} \int_{\ep Z^k} \left\{(1-\psi)u^*+ \psi m_u)\right\} \partial_t \varphi \ dx dt,
%\end{equation}
%for all $\varphi \in C^\infty_0(0,T; H^1(\Omega))$.
%Usual  density arguments allow us to choose $\varphi = \phi(t) \tilde{\psi}(x)$ with $\phi \in C_0^\infty(0,T)$ and $\tilde{\psi} \in H^1(\Omega)$.
%A manipulation of right hand side gives,
\begin{align*}
\left < \partial_t \tilde{u}^\ep, \varphi \tilde{\psi} \right >_\Omega &= - \int_0^T  \sum_{\ep Z^k \subset \Omega} \left (  \int_{\ep Y^k} u^\ep (\partial_t \varphi) \tilde{\psi} \ dx dt
 + \int_{\ep Z^k \setminus Y^k} \left ((1-\psi)u^*+ \psi m_u\right )  (\partial_t \varphi) \tilde{\psi} \ dx  \right ) dt . %\\
%- \int_0^T \sum_{\ep Y^k \subset \Omega} \int_{\ep Y^k}  \partial_t \varphi \ dx dt,
\end{align*}
Since $\partial_t u^\ep \in L^2(0,T;H^{-1}(\Omega^\ep))$, the first group of integrals are estimated by
\begin{align*}
 \left | \int_0^T  \sum_{\ep Z^k \subset \Omega}   \int_{\ep Y^k} u^\ep (\partial_t \varphi) \tilde{\psi} \ dx dt \right | &\leq C \|\partial_t u^\ep\|_{L^2(0,T;H^{-1}(\Omega^\ep))}
 \|\varphi \tilde{\psi}\|_{L^2(0,T;H^{1}(\Omega))} .
\end{align*}
For the remaining we recall Lemma \ref{lemma:intermediate} to obtain
\begin{align*}
& \left |  \int_{\ep Z^k \setminus Y^k} \left ((1-\psi)u^*+ \psi m_u\right )  (\partial_t \varphi) \tilde{\psi} \ dx  dt \right |  \\
& \qquad \leq \left |  \int_{\ep Y_0^k \cap U^k} \left ((1-\psi)u^*+ \psi m_u\right )  (\partial_t \varphi) \tilde{\psi} \ dx  dt \right |
 +  \left |  \int_{\ep Y_0^k \setminus U^k}  m_u   (\partial_t \varphi) \tilde{\psi} \ dx  dt \right |    \\
& \qquad \leq  C \|\partial_t u^\ep\|_{L^2(0,T;H^{-1}(\Omega^\ep))} \|\varphi \tilde{\psi} \|_{L^2(0,T;H^{1}(\Omega))}.
\end{align*}
%recalling $\varphi = \phi \psi$ and using partial integration.
The two estimates above prove the lemma.
%Obviously, we have
%$$
%\left < \partial_t \tilde{u}, \varphi \right >_\Omega=  - \int_0^T \int_Y \left\{(1-\psi)u^*+ \psi m_u)\right\} \partial_t \varphi \ dx dt = \langle \partial_t u, \varphi \rangle_Y
%$$
%for all $\varphi \in C^\infty_0(0,T; C^\infty_0(Y))$. Using the definition of the extension $u^*$ and the properties of $m_u$, we have
%\begin{eqnarray}
%\left| <\partial_t \tilde{u}, \varphi>_Z \right| & \leq & \left| \int_0^T \int_Y u \,\partial_t \varphi \, dx dt \right| + \left|\int_0^T \int_{Y^0 \cap U} \left\{(1-\psi)u^*+ \psi m_u)\right\} \partial_t \varphi \, dx dt \right| \label{estdttildeu}\\
%&&+ \left|\int_0^T \int_{Y^0 \setminus U} m_u \, \partial_t \varphi\, dx dt \right| \nonumber\\
%&& \leq C ||\partial_t u||_{L^2(0,T; H^{-1}(Y))} ||\varphi||_{L^2(0,T; H^1(Z))}.\nonumber
%\end{eqnarray}
%of Thus, the functional $\partial_t \tilde{u}$ from \eqref{partialtutildedistr} can be extended to $L^2(0,T; H^{-1}(Z))$, and due to \eqref{estdttildeu}, we have
%$$
%||\partial_t  \tilde{u}||_{L^2(0,T; H^{-1}(Z))} \leq C ||\partial_t u||_{L^2(0,T; H^{-1}(Y))}.
%$$
\end{proof}

\section{Compactness of the microscopic solutions}
First we note down the definitions of two-scale convergence and a lemma that would found to be useful later. Following definitions are standard (e.g. \cite{Allaire,Maria}).
\begin{definition}
A sequence $u^\ep \in L^2(\Omega^\ep)$ is said to converge two-scale to a limit $u \in L^2(\Omega \times Z)$ iff
\begin{align*}\lim_{\ep \searrow 0} \displaystyle \int\limits_{\Omega^\ep} u^\ep(x) \phi(x, \frac{x}{\ep}) dx = \int\limits_\Omega \int\limits_Z u(x,y) \phi(x,y) dx dy \end{align*}
for all $\phi \in \mathcal{D}( \Omega; C_{\text{per}}^\infty(Z))$.
\end{definition}
\begin{definition}
A sequence $v^\ep \in L^2(\Gamma^\ep_G)$ is said to converge two-scale to a limit $v \in L^2(\Omega \times \Gamma_G))$ iff
\begin{align*}\lim_{\ep \searrow 0} \ep \displaystyle \int\limits_{\Gamma_G^\ep} v^\ep(x) \phi(x, \frac{x}{\ep}) dx = \int\limits_\Omega \int\limits_{\Gamma_G} v(x,y) \phi(x,y) dx dy \end{align*}
for all $\phi \in \mathcal{D}( \Omega; C_{\text{per}}^\infty(\Gamma_G))$. \\
%In addition to the two-scale convergence above, if it holds that
%\begin{align*} \lim_{\ep \searrow 0} \ep \displaystyle \int\limits_{\Gamma_G^\ep} (v^\ep(x))^2 dx = \int\limits_\Omega \int\limits_{\Gamma_G} (v(x,y))^2 dx dy, \end{align*}
% we say that $v^\ep$ converges two-scale {\bf{strongly}} to $v$.
\end{definition}
We state the Oscillation Lemma \index{Oscillation Lemma} for functions defined on lower dimensional periodic manifolds (see \cite{Maria} Lemma 1.3.2)
\begin{lemma}\label{l:osc}
For any function $f \in C^0(\bar{\Omega}; C_{\rm{per}}^0(\Gamma_G))$ holds
\begin{align*} \lim_{\ep \searrow 0} \ep \displaystyle \int\limits_{\Gamma^\ep_G} f \left ( x,\frac{x}{\ep} \right ) dx = \int\limits_\Omega \int\limits_{\Gamma_G} f(x,y) dx dy.\end{align*}
\end{lemma}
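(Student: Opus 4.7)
The plan is to reduce the left-hand side to a Riemann sum for the right-hand side by decomposing the surface integral cell by cell, rescaling each piece to the reference cell, and then using the uniform continuity of $f$ on the compact set $\bar{\Omega} \times \Gamma_G$.

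First I would exploit the standing assumption that $\bar{\Omega}$ is tiled by the $\ep$-cells $\ep Z^k$ for $k \in I^\ep$, so that $\Gamma_G^\ep$ is (up to measure zero) the disjoint union $\bigcup_{k \in I^\ep} \ep(\Gamma_G + k)$. This gives
\[
\ep \int_{\Gamma_G^\ep} f\!\left(x, \tfrac{x}{\ep}\right) dx = \ep \sum_{k \in I^\ep} \int_{\ep(\Gamma_G + k)} f\!\left(x, \tfrac{x}{\ep}\right) dx.
\]
On each piece I would perform the change of variables $x = \ep(y+k)$ with $y \in \Gamma_G$. Since $\Gamma_G$ is a two-dimensional surface, the surface element transforms as $dx = \ep^2\,dy$, and by the $Z$-periodicity of $f$ in the second argument, $f(\ep(y+k), y+k) = f(\ep(y+k), y)$. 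Thus
\[
\ep \int_{\Gamma_G^\ep} f\!\left(x, \tfrac{x}{\ep}\right) dx = \sum_{k \in I^\ep} \ep^{3} \int_{\Gamma_G} f(\ep(y+k), y)\,dy.
\]

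Next I would introduce $G(x) := \int_{\Gamma_G} f(x,y)\,dy$. Because $f \in C^0(\bar{\Omega}; C^0_{\rm per}(\Gamma_G))$ is uniformly continuous on $\bar{\Omega} \times \Gamma_G$, the function $G$ is continuous on $\bar{\Omega}$. For each $k$, every point $\ep(y+k)$ with $y \in \Gamma_G$ lies within distance $\ep\,\mathrm{diam}(Z)$ of the lattice point $\ep k$, so the modulus-of-continuity estimate
\[
\left| \int_{\Gamma_G} f(\ep(y+k), y)\,dy - G(\ep k) \right| \leq |\Gamma_G|\, \omega_f(\ep\,\mathrm{diam}(Z))
\]
holds uniformly in $k$. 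Multiplying by $\ep^{3}$ and summing over the $O(\ep^{-3})$ lattice points yields a total error of order $\omega_f(\ep)$, which vanishes as $\ep \searrow 0$.

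Finally, the leading sum $\sum_{k \in I^\ep} \ep^{3}\, G(\ep k)$ is a standard Riemann sum for the continuous function $G$ on the unit cube, and hence converges to $\int_\Omega G(x)\,dx = \int_\Omega \int_{\Gamma_G} f(x,y)\,dy\,dx$. Combining this with the uniform error bound above establishes the claim. The only point requiring care is bookkeeping in the change of variables, particularly the factor $\ep^2$ in the transformation of the surface measure; no deep analytic difficulty arises because the continuity hypothesis on $f$ is strong enough to make the Riemann-sum argument classical, and the tiling assumption on $\Omega$ removes any boundary correction.
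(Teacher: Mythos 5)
Your proof is correct: the decomposition of $\Gamma_G^\ep$ into the translated copies $\ep(\Gamma_G+k)$, the $\ep^2$ scaling of the surface measure, the use of $Z$-periodicity in the second argument, and the uniform-continuity/Riemann-sum argument for $G(x)=\int_{\Gamma_G}f(x,y)\,dy$ all fit together without gaps, and the tiling assumption on $\Omega$ indeed removes boundary corrections. Note that the paper itself gives no proof of this lemma --- it only cites Lemma 1.3.2 of the reference on two-scale convergence on periodic surfaces --- and your argument is precisely the standard proof of that oscillation lemma, so there is nothing in the paper to contrast it with.
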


Based on the estimates proved in the preceding section, the following compactness properties of the microscopic solutions can be shown.
\begin{lemma} \label{th:convergence}
There exists limit functions
\[
\begin{array}{lll}
u \in L^2(0,T;H^1(\Omega)), &  \partial_t u \in L^2(0,T;H^{-1}(\Omega)), & u_1 \in L^2(0,T ; L^2(\Omega ; H_{\rm{per}}^1(Z)),\\
 v \in L^2((0,T) \times \Omega \times \Gamma_G), & \partial_t v \in L^2((0,T) \times \Omega \times \Gamma_G), & w \in L^2((0,T) \times \Omega \times \Gamma_G),
\end{array}
\]
such that up to a subsequence
\begin{enumerate}
\item[1.] $ \tilde u^\ep \rightharpoonup u \; \text{weakly in}\; L^2(0,T;H^1(\Omega)), $
\item[2.] $ \partial_t \tilde u^\ep \rightharpoonup \partial_t u \; \text{weakly in} \; L^2(0,T;H^{-1}(\Omega)),$
\item [3.]$  \tilde u^\ep \rightarrow u \; \text{ strongly  in} \; C^{0}(0,T;H^{-s}(\Omega))\cap L^2(0,T;H^{s}(\Omega)) , s \in (0,1)$
 \item [4.] ${u}^\ep$ two-scale converges to $u$.
 \item [5.]$\nabla u^\ep$ two-scale converges to $\nabla_x u + \nabla_y u_{1}$.
 \item [6.]$v^\ep$ two-scale converges to $v$.
 \item [7.]$\partial_t v^\ep$ two-scale converges to $\partial_t v$.
 \item [8.] $w^\ep$ two-scale converges to $w$.
\end{enumerate}
\end{lemma}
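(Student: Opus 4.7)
The plan is to extract compactness properties for the bulk variable $\tilde u^\ep$ (and its gradient) via the extension and Aubin--Lions machinery, and then to handle the surface quantities via the two-scale compactness theorem on oscillating manifolds. The weak convergence statements (1) and (2) follow directly: Lemma \ref{lemma:ext2} combined with Theorem \ref{th:estim1} bounds $\tilde u^\ep$ uniformly in $L^2(0,T;H^1(\Omega))$, while Lemma \ref{lemma:ext3}, which crucially relies on the difference-quotient bound \eqref{eq:LipContraction}, bounds $\partial_t \tilde u^\ep$ uniformly in $L^2(0,T;H^{-1}(\Omega))$. Banach--Alaoglu then delivers a common subsequence along which (1) and (2) hold; the identification of the weak limit of $\partial_t \tilde u^\ep$ with the distributional time derivative of $u$ is a routine test-function argument (testing against $\varphi\otimes\psi$ with $\varphi \in C_c^\infty(0,T)$, $\psi \in H^1_0(\Omega)$).

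For the strong convergence (3), I would invoke Simon's version of the Aubin--Lions theorem. Since $\tilde u^\ep$ is bounded in $L^\infty(0,T;L^2(\Omega))$ (by Theorem \ref{th:estim1}) and $\partial_t \tilde u^\ep$ is bounded in $L^2(0,T;H^{-1}(\Omega))$, the compact embedding $L^2(\Omega) \hookrightarrow\hookrightarrow H^{-s}(\Omega)$, valid for $s>0$, yields relative compactness in $C([0,T];H^{-s}(\Omega))$. Likewise, the bound in $L^2(0,T;H^1(\Omega))$ combined with $\partial_t \tilde u^\ep$ bounded in $L^2(0,T;H^{-1}(\Omega))$, together with the compact embedding $H^1(\Omega) \hookrightarrow\hookrightarrow H^s(\Omega)$ for $s \in (0,1)$, gives relative compactness in $L^2(0,T;H^s(\Omega))$. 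The main obstacle in the entire compactness analysis is precisely securing the time-derivative estimate for the extension: because of the possibly discontinuous dissolution rate $w^\ep$, the classical strategy of differentiating the equation in time fails, and one is forced to exploit the $L^1$-contraction-type bound from Lemma \ref{lemma:regularity} through the tailored extension of Lemma \ref{lemma:ext3}. All the subsequent steps below are relatively standard given these bounds.

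Items (4) and (5) follow from the Allaire--Nguetseng two-scale compactness theorem applied to $\tilde u^\ep$ and $\nabla \tilde u^\ep$, both bounded in $L^2((0,T)\times\Omega)$: the $y$-independence of the two-scale limit of $\tilde u^\ep$, which identifies it with the $u$ already obtained in (1), and the structural decomposition $\nabla_x u + \nabla_y u_1$ with $u_1 \in L^2((0,T)\times\Omega; H^1_{\rm per}(Z)/\mathbb{R})$, both follow from the $H^1$-bound. Items (6)--(8) are handled by the two-scale compactness theorem for oscillating surfaces developed in \cite{Maria}: the scaled bounds $\varepsilon \|v^\ep\|^2_{L^\infty(0,T;L^2(\Gamma_G^\ep))} + \varepsilon \|\partial_t v^\ep\|^2_{L^2(\get)} \leq C$ from Theorem \ref{th:estim1}, combined with $0 \leq w^\ep \leq 1$ and the surface-measure bound $\varepsilon|\get|\leq C$ obtained via Lemma \ref{l:osc}, are exactly the scalings required by this framework to extract two-scale limits living in $L^2((0,T)\times\Omega\times\Gamma_G)$. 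A further test-function argument, using $\varepsilon$-scaled surface integrals of $\varphi(t,x)\psi(x/\varepsilon)$ against both $v^\ep$ and $\partial_t v^\ep$, identifies the two-scale limit of $\partial_t v^\ep$ with $\partial_t v$, completing the proof.
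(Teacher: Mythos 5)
Your proposal is correct and follows essentially the same route as the paper: the a priori estimates of Theorem \ref{th:estim1} together with the extension Lemmas \ref{lemma:ext2} and \ref{lemma:ext3} (the latter resting on the difference-quotient bound \eqref{eq:LipContraction}) give items 1--2, a standard compactness-in-time argument (the paper cites interpolation, you cite Aubin--Lions--Simon, which amounts to the same input bounds) gives item 3, and the Allaire--Nguetseng and surface two-scale compactness results of \cite{Allaire,Maria} give items 4--8. You correctly identify the time-derivative estimate for the extension as the only nonstandard ingredient, which is exactly the point the paper emphasizes.
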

\begin{proof}
The first two results are immediate by using the estimates \eqref{eq:PH:eps_estimates} and the extension lemmas \ref{lemma:ext2} and \ref{lemma:ext3}.  Result item $3$ comes from standard interpolation arguments for Sobolev spaces using  $\tilde {u}^\ep \in L^\infty (0,T;H^1(\Omega))$, and $\partial_t \tilde{u}^\ep \in L^2(0,T;H^{-1}(\Omega))$. In particular, this implies  strong convergence of $u^\ep$ in $L^2(0,T;L^2(\Omega))$. For result item $4$, the compactness arguments (\cite{Allaire,Maria}) imply the two-scale convergence to the same $u$. Compactness results (see \cite{Maria}) for sequences defined on the boundary $\Gamma_G^\ep$, that is for $v^\ep, \partial_t v^\ep$ and $w^\ep$,  yield result items $5$ to $8$.
\end{proof}
Using result item $3$ in Lemma \ref{th:convergence}, a small calculation below shows that $r(u^\ep)$ converges two-scale to $r(u)$. Let $s \in ({1}/{2},1)$. Using the Lipschitz continuity of $r$ and the trace inequality from \cite{anna}, Lemma 4.3, we obtain
\begin{align}
\label{eq:PH:r2s}
\begin{array}{lrc}
 \ep \|r(u^\ep) - r(u)\|_{\get}   \leq C \|u^\ep-u\|_{L^2(0,T; H^s (\Omega^\ep))} \leq C \|u^\ep-u\|_{L^2(0,T; H^s (\Omega))} \searrow 0.
 \end{array}
 \end{align}
This yields
\begin{align*}
\begin{array}{lll}
\left | \displaystyle \int\limits_{\Gamma_G^{\ep T}}  \ep r(u^\ep) \phi(x,\frac{x}{\ep}) dx dt - \displaystyle \int\limits_{\Omega^T} \displaystyle \int\limits_{\Gamma_G} r(u) \phi(x,y) dy dx dt \right|  \leq \\
 \displaystyle \int\limits_{\Gamma_G^{\ep T}} \left | \ep (r(u^\ep) - r(u)) \phi(x,\frac{x}{\ep}) \right | dx dt + \left | \displaystyle \int\limits_{\Gamma_G^{\ep T}} \ep r(u) \phi(x,\frac{x}{\ep}) dx dt - \displaystyle \int\limits_{\Omega^T} \int\limits_{\Gamma_G} r(u) \phi(x,y) dy dx dt \right|
 \end{array}
\end{align*}
and using \eqref{eq:PH:r2s} first term on the right vanishes and the second term tends to zero because of the Oscillation Lemma \ref{l:osc}. Thus, we have shown that  $r(u^\ep)$ converges 2-scale to $r(u)$. \\
Even though $w^\ep$ converges two-scale to $w$,  however this does not provide explicit form for the function $w$. This identification will be obtained by considering the convergence of $v^\ep$ to $v$ in more details. We follow the ideas in \cite{BLM96,CDG08} and use the unfolding operator \index{Unfolding operator}to \index{Unfolding operator!Domain} establish the strong two-scale convergence for $v^\ep$.
\begin{definition}
For a given $\ep >0$, we define an unfolding operator $T^\ep$ mapping measurable functions on $(0,T) \times \Gamma_G^\ep$ to measurable functions on $(0,T) \times \Omega \times \Gamma_G$ by
\begin{align*} T^\ep f(t,x,y) = f(t,\ep {[\frac{x}{\ep}]}+\ep y), \quad y \in \Gamma_G, \quad (t,x) \in (0,T) \times \Omega.\end{align*}
\end{definition}

\begin{remark}
Following \cite{BLM96, CDG08}, the two-scale convergence on $\Gamma_G^\ep$ becomes weak convergence of sequence of unfolded functions on $\Gamma_G \times \Omega$. Besides, the strong convergence of sequence of unfolded functions on $\Gamma_G \times \Omega$  is equivalent to strong two-scale convergence of $v^\ep$ as introduced in \cite{maria-willi}.
\end{remark}
The strong convergence of the unfolded sequence $T^\ep v^\ep$ is provided by the lemmas below.
\begin{lemma}
If $T^\ep v^\ep \rightarrow v^*$ weakly in $L^2((0,T)\times \Omega \times \Gamma_G)$ and $v^\ep$ converges two-scale to $v$ then $v^* = v$ a.e. on $(0,T) \times \Omega \times \Gamma_G$.
\end{lemma}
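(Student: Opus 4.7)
The plan is to pair both convergences against a common smooth test function and exploit the well-known duality between the unfolding operator on $\Gamma_G^\ep$ and two-scale convergence on the same manifold. Fix an arbitrary $\phi \in \mathcal{D}((0,T) \times \Omega; C^\infty_{\mathrm{per}}(\Gamma_G))$. Weak $L^2$-convergence of $T^\ep v^\ep$ gives
\[
\int_0^T \int_\Omega \int_{\Gamma_G} T^\ep v^\ep \, \phi \, dy\, dx\, dt \;\longrightarrow\; \int_0^T \int_\Omega \int_{\Gamma_G} v^* \, \phi \, dy\, dx\, dt,
\]
while the two-scale convergence of $v^\ep$ gives
\[
\ep \int_0^T \int_{\Gamma_G^\ep} v^\ep(t,x)\, \phi(t,x,x/\ep)\, ds(x)\, dt \;\longrightarrow\; \int_0^T \int_\Omega \int_{\Gamma_G} v \, \phi \, dy\, dx\, dt.
\]
It then suffices to show that the two left-hand sides differ by $o(1)$ as $\ep \searrow 0$.

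For this, decompose $\Omega$ into the $\ep$-cells $\ep Z^k$, parametrize the surface piece $\ep k + \ep \Gamma_G$ of $\Gamma_G^\ep$ by $y \in \Gamma_G$ via $x = \ep k + \ep y$ (with surface Jacobian $\ep^{n-1}$), and use the definition $T^\ep v^\ep(t,x,y) = v^\ep(t, \ep[x/\ep] + \ep y)$ to derive the cell-by-cell identity
\[
\int_\Omega \int_{\Gamma_G} T^\ep v^\ep \, \phi \, dy\, dx \; - \; \ep \int_{\Gamma_G^\ep} v^\ep \, \phi(\cdot, \cdot/\ep)\, ds \;=\; \sum_k \int_{\Gamma_G} v^\ep(\ep k + \ep y) \Bigl( \int_{\ep Z^k} \phi(x,y)\, dx \; - \; \ep^n \phi(\ep k + \ep y, y) \Bigr) ds(y).
\]
Smoothness of $\phi$ in the $x$-variable bounds the parenthesis by $C\,\ep^{n+1}\|\nabla_x \phi\|_\infty$, so the right-hand side is controlled by $O(\ep)\cdot\ep\int_{\Gamma_G^\ep}|v^\ep|\,ds$. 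Applying Cauchy--Schwarz together with the measure estimate $\ep\,|\Gamma_G^\ep| \leq C$ and the $\ep$-uniform bound $\ep\,\|v^\ep\|^2_{L^2(\Gamma_G^\ep)} \leq C$ from Theorem \ref{th:estim1}, this difference is $O(\ep)$ uniformly in $t$, and hence vanishes in the limit after integrating over time.

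Combining the three displays yields $\int_0^T \int_\Omega \int_{\Gamma_G} (v^* - v)\,\phi\,dy\,dx\,dt = 0$ for every admissible $\phi$, and density of such test functions in $L^2((0,T) \times \Omega \times \Gamma_G)$ gives $v^* = v$ almost everywhere. The only mildly technical step is the per-cell identity above; it is essentially a routine bookkeeping exercise on the definition of $T^\ep$ and the scaled surface parametrization, and no substantial obstacle is anticipated.
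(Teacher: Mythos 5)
Your proof is correct, but it is worth noting that the paper itself does not prove this lemma at all: it simply cites Lemma 4.6 of the reference \cite{anna} (see also \cite{BLM96}). What you have written is essentially the standard computation that underlies that cited result, carried out explicitly: since $T^\ep v^\ep(t,\cdot,y)$ is constant on each cell $\ep Z^k$, the unfolded integral against a test function reduces cell by cell to the scaled surface integral up to replacing $\phi(\ep k+\ep y,y)$ by its cell average in $x$, and the resulting error is $O(\ep^{n+1})$ per cell; summing over the $O(\ep^{-n})$ cells and converting $\sum_k\int_{\Gamma_G}|v^\ep(\ep k+\ep y)|\,dy$ back to $\ep^{-(n-1)}\int_{\Gamma_G^\ep}|v^\ep|\,ds$ gives a total error of order $\ep\cdot\bigl(\ep\int_{\Gamma_G^\ep}|v^\ep|\,ds\bigr)$, which is $O(\ep)$ by Cauchy--Schwarz, $\ep\,|\Gamma_G^\ep|\le C$, and the uniform bound $\ep\,\|v^\ep\|^2_{L^\infty(0,T;L^2(\Gamma_G^\ep))}\le C$ from Theorem~\ref{th:estim1}. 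Your computation checks out, and the final density step is standard. Two small points you use implicitly but should perhaps state: the argument relies on $\Omega$ being an exact union of $\ep$-cells (no partial boundary cells), which holds here since $1/\ep\in\mathbb{N}$ and $\Omega=(0,1)^3$; and the periodicity of $\phi$ in $y$ is what lets you identify $\phi(\ep k+\ep y, k+y)$ with $\phi(\ep k+\ep y, y)$ in the surface parametrization. The benefit of your route is a self-contained proof not resting on an external lemma; the cost is only the bookkeeping you have already done.
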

\begin{proof}{
See Lemma 4.6, \cite{anna} ( see also \cite{BLM96}).
}\end{proof}
\begin{lemma} \label{lemma:2s_strong_vep}
$T^\ep v^\ep$ converges strongly in $L^2((0,T) \times \Omega \times \Gamma_G)$.
\end{lemma}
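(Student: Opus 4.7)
The plan is to upgrade the weak two-scale convergence $T^\ep v^\ep \rightharpoonup v$ (from Lemma \ref{th:convergence}, item 6) to strong convergence of the unfolded sequence in $L^2((0,T) \times \Omega \times \Gamma_G)$. Since this is a Hilbert space, it suffices to establish convergence of norms. The pivotal observation driving the proof is the pointwise identity
\begin{align*}
v^\ep w^\ep = v^\ep \qquad \text{a.e. on } \get,
\end{align*}
which holds because $v^\ep \geq 0$ a.e.\ (Theorem \ref{th:estim1}) and $w^\ep \in H(v^\ep)$: on $\{v^\ep > 0\}$ one has $w^\ep = 1$, while on $\{v^\ep = 0\}$ the product vanishes regardless of the value of $w^\ep$.

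First, the exact unfolding isometry (valid since $\Omega$ is a union of full $\ep$-cells) gives $\|T^\ep v^\ep\|^2_{L^2((0,T) \times \Omega \times \Gamma_G)} = \ep \int_0^T \int_{\Gamma_G^\ep} (v^\ep)^2\, dx\, dt$. Differentiating $(v^\ep)^2$ in time using $\partial_t v^\ep = r(u^\ep) - w^\ep$ together with the identity above, I would write
\begin{align*}
(v^\ep(t))^2 = v_I^2 + 2\int_0^t v^\ep r(u^\ep)\, ds - 2\int_0^t v^\ep\, ds \qquad \text{on } \get,
\end{align*}
where every right-hand factor now has known convergence behaviour. Multiplying by $\ep$, integrating over $\Gamma_G^\ep$ and $t\in(0,T)$, and passing to $\ep \to 0$ uses (i) $T^\ep v_I \to v_I$ strongly in $L^2(\Omega \times \Gamma_G)$, valid since $v_I \in H^1(\Omega)$ is $y$-independent; (ii) $T^\ep r(u^\ep) \to r(u)$ strongly in $L^2$, which follows from the strong convergence of $u^\ep$ in $L^2(0,T;L^2(\Omega))$ (item 3 of Lemma \ref{th:convergence}), the Lipschitz continuity of $r$, and the standard convergence $T^\ep r(u) \to r(u)$ for $y$-independent $L^2$ functions; and (iii) the weak convergence $T^\ep v^\ep \rightharpoonup v$. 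The product of weakly and strongly convergent sequences converges weakly, which suffices for the bilinear terms and yields
\begin{align*}
\lim_{\ep \to 0} \|T^\ep v^\ep\|^2 = T |\Gamma_G| \|v_I\|^2_{L^2(\Omega)} + 2\int_0^T \int_0^t \int_\Omega \int_{\Gamma_G} v(r(u)-1)\, dy\, dx\, ds\, dt .
\end{align*}

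On the other hand, the macroscopic limit satisfies $\partial_t v = r(u) - w$ with $v(0) = v_I$, obtained by passing to the limit in the linear ODE on $\get$ using only the weak two-scale convergences from Lemma \ref{th:convergence}. A parallel computation for $\|v\|^2$ then gives
\begin{align*}
\|v\|^2_{L^2((0,T) \times \Omega \times \Gamma_G)} = T |\Gamma_G| \|v_I\|^2_{L^2(\Omega)} + 2\int_0^T \int_0^t \int_\Omega \int_{\Gamma_G} v(r(u)-w)\, dy\, dx\, ds\, dt ,
\end{align*}
so that $\lim_\ep \|T^\ep v^\ep\|^2 - \|v\|^2 = -2 \int_0^T \int_0^t \int_\Omega \int_{\Gamma_G} v(1-w)\, dy\, dx\, ds\, dt \leq 0$, because $v \geq 0$ and $w \leq 1$ a.e.\ (these pointwise bounds are inherited by weak limits of uniformly bounded sequences). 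Combined with the lower bound $\liminf \|T^\ep v^\ep\|^2 \geq \|v\|^2$ from weak lower semi-continuity, this forces norm convergence, and in a Hilbert space norm convergence together with weak convergence yields the required strong convergence.

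The main obstacle is the apparent circularity that the identity $v^\ep w^\ep = v^\ep$ precisely resolves: without it, the squared-norm computation would involve the product $v^\ep w^\ep$ of two sequences converging only weakly, and passing to its limit would presuppose the Heaviside identification $vw = v$ in the limit---which is exactly what the present strong convergence is meant to unlock. Replacing the product by $v^\ep$ from the outset converts the obstruction into a linear term in $v^\ep$ that behaves well under weak convergence, and one gains the relation $\int v(1-w) = 0$ as an a posteriori bonus.
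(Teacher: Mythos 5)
Your proof is correct, but it takes a genuinely different route from the paper's. The paper shows that the unfolded sequence is Cauchy in $L^2((0,T)\times\Omega\times\Gamma_G)$: it writes the unfolded ODE for the difference $T^{\ep_n}v^{\ep_n}-T^{\ep_m}v^{\ep_m}$, multiplies by that difference, discards the dissolution terms using the monotonicity of $w^\ep$ as a function of $v^\ep$ (so that $(T^{\ep_n}v^{\ep_n}-T^{\ep_m}v^{\ep_m})(T^{\ep_n}w^{\ep_n}-T^{\ep_m}w^{\ep_m})\geq 0$), and closes with the strong convergence of $T^\ep r(u^\ep)$ and Gronwall. You instead prove convergence of norms: the identity $v^\ep w^\ep=v^\ep$, valid because $v^\ep\geq 0$ and $w^\ep$ satisfies \eqref{eq:PH:w=r}, linearizes the only troublesome product in the energy identity for $(v^\ep)^2$, so that $\lim_\ep\|T^\ep v^\ep\|^2$ is computable from the weak two-scale limits alone; comparing with the energy identity for the limit ODE $\partial_t v=r(u)-w$ gives a defect $-2\int v(1-w)\leq 0$, which together with weak lower semicontinuity forces norm convergence, hence strong convergence. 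Both arguments rest on the same two pillars --- the strong two-scale convergence of $r(u^\ep)$ established in \eqref{eq:PH:r2s} and the monotone structure of the rate --- but they exploit the latter differently: the paper uses only the monotonicity of the graph $H$, so its argument would survive sign-changing $v^\ep$ or more general maximal monotone rates, whereas your identity $v^\ep w^\ep=v^\ep$ is specific to the Heaviside graph acting on a non-negative precipitate concentration. In exchange, your method yields $\int_{\Omega^T\times\Gamma_G}v(1-w)=0$ as a by-product, which already identifies $w=1$ a.e.\ on $\{v>0\}$ before the case analysis of Section 6. Two points you should make explicit: the cross term $\ep\int_{\get}v^\ep r(u^\ep)$ requires the trace of $r(u)$ on $\Gamma_G^\ep$, hence $u\in L^2(0,T;H^s(\Omega))$ with $s>1/2$ (as in \eqref{eq:PH:r2s}), not merely $u\in L^2$; and the limit ODE with initial datum $v_I$ must be derived beforehand from the weak two-scale convergences of $v^\ep$, $\partial_t v^\ep$, $r(u^\ep)$ and $w^\ep$ alone --- this is indeed possible, so there is no circularity, but it inverts the order in which the paper presents Lemmas \ref{lemma:2s_strong_vep} and \ref{lemma:PH:vw2scale}.
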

\begin{proof}{
Let us recall \eqref{eq:PH:w=r}, and note that $w^\ep$ is monotonically increasing with respect to $v^\ep$. This also implies that $T^\ep w^\ep$ is monotone with respect to $T^\ep v^\ep$.
With the change in variable $x \mapsto \ep {[\frac{x}{\ep}]} + \ep y $,$y \in \Gamma_G$ the equation \eqref{eq:PH:w}$_2$ reads on the fixed domain $(0,T) \times \Omega \times \Gamma_G$
\begin{align*}
\partial_t T^\ep v^\ep = T^\ep r(u^\ep) - T^\ep w^\ep.
\end{align*}
 We will prove below that the unfolded sequence $T^\ep v^\ep$ is a Cauchy sequence and hence will converge strongly in $L^2$. Our approach is close to that used in \cite{anna} (also see \cite{maria-willi} for similar results by using translation estimates). The strong convergence of $T^\ep r(u^\ep)$ to $ r(u)$ in $L^2((0,T) \times \Omega \times \Gamma_G)$ and the monotonicity of $T^\ep w^\ep$ will be used to obtain this. Let $m,n$ be two natural number with $n >m$. Now  $T^{\ep_n} v^{\ep_n}-T^{\ep_m} v^{\ep_m}$ satisfies
\begin{align}
\label{eq:PH:vt1}
\begin{array}{lll}
\displaystyle \frac{d}{dt} \|T^{\ep_n} v^{\ep_n}-T^{\ep_m} v^{\ep_m}\|_{L^2(\Gamma_G \times \Omega)}^2 = \\
\displaystyle \int\limits_{\Gamma_G \times \Omega} \left \{ T^{\ep_n} v^{\ep_n}-T^{\ep_m} v^{\ep_m} \right \}  \left \{ T^{\ep_n} r(u^{\ep_n})-T^{\ep_n} w^{\ep_n} - T^{\ep_m} r(u^{\ep_m})+T^{\ep_m} w^{\ep_m} \right \} dx dy.
\end{array}
\end{align}
By monotonicity of $T^\ep w^\ep $ with respect to $T^\ep v^\ep$, we have
\begin{align}
\label{eq:PH:vt2}
\large  ( T^{\ep_n} v^{\ep_n}-T^{\ep_m} v^{\ep_m} \large ) \left (T^{\ep_n} w^{\ep_n}-T^{\ep_m} w^{\ep_m} \right ) \geq 0.
\end{align}
Using \eqref{eq:PH:vt2} in \eqref{eq:PH:vt1}, the right hand side is estimated as
\begin{align*}
\begin{array}{lll}
\displaystyle \frac{d}{dt} \|T^{\ep_n} v^{\ep_n}-T^{\ep_m} v^{\ep_m}\|_{L^2(\Gamma_G \times \Omega )}^2 \\
\leq  \displaystyle \int\limits_{\Gamma_G \times \Omega} \left \{ T^{\ep_n} v^{\ep_n}-T^{\ep_m} v^{\ep_m} \right \} \left \{ T^{\ep_n} r(u^{\ep_n})- T^{\ep_m} r(u^{\ep_m}) \right \} dx dy \\
\leq \displaystyle \frac{1}{2} \|T^{\ep_n} v^{\ep_n}-T^{\ep_m} v^{\ep_m}\|_{L^2(\Gamma_G \times \Omega)}^2 + \frac{1}{2} \|T^{\ep_n} r(u^{\ep_n})- T^{\ep_m} r(u^{\ep_m})\|_{L^2(\Gamma_G \times \Omega)}^2.
\end{array}
\end{align*}
Now integrate in time and notice that as $(n,m) \rightarrow \infty$, due to strong convergence of $T^\ep r(u^\ep)$  the second term goes to $0$ uniformly. Using Gronwall's lemma we conclude that
\begin{align*} \|T^{\ep_n} v^{\ep_n}-T^{\ep_m} v^{\ep_m}\|_{L^2(\Gamma_G^T \times \Omega_h)}^2 \rightarrow 0 \; \mathrm{as} \; n,m \rightarrow \infty\end{align*}
uniformly and hence establishing the strong convergence of $T^\ep v^\ep$ in $L^2((0,T) \times \Omega \times \Gamma_G)$.
}\end{proof}
\begin{remark}{
Note that $w^\ep$ may have discontinuities with respect to $t,x$ which makes dealing with $T^\ep w^\ep$ a delicate task. In the present situation, we are rescued by the fact that $T^\ep w^\ep$ is monotone with respect to $T^\ep v^\ep$ and
 hence, \eqref{eq:PH:vt2} has a good sign which we use in \eqref{eq:PH:vt1}. An alternative approach would be to formulate the boundary conditions as a variational inequality and then use the monotonicity arguments e.g. in \cite{jaeger-mikelic}.
}\end{remark}

%\begin{lemma} \label{l:2s}
%Furthermore, for $v^\ep, w^\ep$ and $\partial_t v^\ep$:
%\begin{lemma} \label{l:2sv}
%%%%%%%%%%%%%%%%%%%%%%%%%%%%%%%%%%%%%%%%%%%%%%%%%%%%%%%%%%%%%%%%%%%%%%%%%%%%%%%%%%%%%%%%
%%%%%%%%%%%%%%%%%%%%%%%%%%%%%%%%%%%%%%%%%%%%%%%%%%%%%%%%%%%%%%%%%%%%%%%%%%%%%%%%%%%%%%%%
%%%%%%%%%%%%%%%%%%%%%%%%%%%%%%%
\section{Passing to the limit in the microscopic equations.}

%Recalling that the flow is not affected by the chemistry, this component of the model can be homogenized separately. The macroscopic variables $(\textbf{q}, P)$ satisfy the Darcy law, see e.g. \cite{All89,  jaeger-mikelic, MA87, SP}.

Up to now we have obtained the existence of a limit triple $(u,v,w)$ for the sequence $(u^\ep,v^\ep,w^\ep)$. Here we proceed by identifying this limit as the solution of the upscaled system of equations \eqref{eq:PH:chem_homog}, with the initial and boundary conditions \eqref{eq:PH:uic}.
In view of two-scale convergence results, the derivation of limit problem for \eqref{eq:PH:w}$_1$ is standard. We defer the derivation of the limit problem and the cell problem to the end of this section. We begin by considering \eqref{eq:PH:w}$_2$ which contains the nonlinearities. Passing to the limit  as $\ep \searrow 0$ for the left hand side is straightforward. The right hand side is taken care of by using the two scale convergence of $r(u^\ep)$ and the strong convergence obtained in Lemma \ref{lemma:2s_strong_vep}. What remains is to consider \eqref{eq:PH:w}$_3$ and prove that $w \in H(v)$ and has the structure of \eqref{eq:w=r}. Recall that from above discussions, we have the following information:
\begin{align*}
\begin{array}{lll}
T^\ep v^\ep \rightarrow v \quad \mathrm{strongly \; in } \quad L^2((0,T) \times \Omega  \times \Gamma_G) ,\\
T^\ep w^\ep \rightarrow w \quad \mathrm{weakly  \; in} \quad L^2((0,T) \times \Omega  \times \Gamma_G), \\
T^\ep w^\ep \in H(T^\ep v^\ep).
\end{array}
\end{align*}
Since $T^\ep v^\ep \rightarrow v$ strongly in $L^2((0,T) \times \Omega  \times \Gamma_G)$ we have $T^\ep v^\ep \rightarrow v$ a.e.. We have only two situations, either $v(t,x,y) >0$ or $v(t,x,y) =0$. In the first case and with $\mu := v(t,x,y)/2 >0$, the pointwise convergence implies the existence of a $\ep_\mu >0$ such that $T^\ep v^\ep > \mu$ for all $\ep \leq \ep_\mu$. Then for any $\ep \leq \ep_\mu$ we have $T^\ep w^\ep = 1$ implying $w =1$. \\
For the case when $v=0$, we consider the following situations:
\begin{enumerate}
\item [(a)] $u > u^*$ \\
From the pointwise convergence of $T^\ep u^\ep$, there exists an $\ep^*$ such that for $\ep \leq \ep^*$, we have
$u^\ep > u^*$. This gives, using monotonicity of $r$, $r(T^\ep u^\ep) >1$ and recall the definition \eqref{eq:PH:w=r} to obtain $T^\ep w^\ep =1$. This implies that $T^\ep w^\ep \rightarrow 1$ pointwise a.e.
\item [(b)] $u \in [0,u^*)$ \\
Again the pointwise convergence of $T^\ep u^\ep$ implies that for small enough $\ep$, $u^\ep \in (0,u^*)$. In this case, $r(T^\ep u^\ep) <1$ leading to $T^\ep w^\ep = r(T^\ep u^\ep)$ using \eqref{eq:PH:w=r}. With strong convergence of $r$, we get $T^\ep w^\ep$ converges  to $r(u)$ pointwise a.e..
\item [(c)]  $u =u^*$ \\
Using similar arguments as above, $r(u)=1$ and for sufficiently small $\ep$, $r(T^\ep u^\ep) \rightarrow 1$ pointwise a.e.. Hence, $T^\ep w^\ep = \min(r(T^\ep u^\ep),1) \rightarrow 1$ pointwise a.e..
\end{enumerate}
Collecting the above cases, $T^\ep w^\ep$ converges pointwise a.e. to $\widetilde{w}$ where
\begin{align}
\widetilde{w} =
\left \{ \begin{array}{rrr}
1, & v>0,\\
\min(r(u),1), & v =0,\\
0, & v <0.
\end{array}
\right.
\end{align}
Combine this with the weak$-*$ convergence to get $w = \widetilde{w}$ implying that  $w$ has the structure of \eqref{eq:w=r}. This completes the identification of $w$. The above discussions are summarized in the following:
\begin{lemma} \label{lemma:PH:vw2scale}
The two-scale limit functions $v, w$ satisfy
\begin{align*}
\begin{array}{rcl}
\displaystyle (\partial_t v, \theta)_{\Omega^T \times \Gamma_G} &=&  \displaystyle \int\limits_{\Omega^T \times \Gamma_G} \left ( r(u) - w \right ) \theta \qquad \text{ for all } \qquad \theta \in C^\infty(\Omega^T,C^\infty(\Gamma_G)) ,\\
 w &\in &  H(v) \text{ and satisfies } \eqref{eq:w=r}.
 \end{array}
 \end{align*}
\end{lemma}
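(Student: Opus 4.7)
The plan is to pass to the limit $\ep \to 0$ in the second weak equation of \eqref{eq:PH:w}, and then separately identify $w$ using the strong convergence results already obtained.

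First I would insert test functions of the oscillating form $\theta_\ep(t,x) := \theta(t,x,x/\ep)$ with $\theta \in \mathcal{D}(\Omega^T; C^\infty_{\rm per}(\Gamma_G))$ into the second equation of \eqref{eq:PH:w}. Since the two-scale convergence on $\Gamma_G^\ep$ carries the intrinsic $\ep$-factor, this reads
\begin{align*}
\ep\,(\partial_t v^\ep,\theta_\ep)_{\Gamma_G^{\ep T}} \;=\; \ep\,(r(u^\ep) - w^\ep, \theta_\ep)_{\Gamma_G^{\ep T}}.
\end{align*}
Using the two-scale convergences $\partial_t v^\ep \to \partial_t v$ and $w^\ep \to w$ (items 7, 8 of Lemma \ref{th:convergence}), together with $r(u^\ep) \to r(u)$ two-scale established in \eqref{eq:PH:r2s}, each term converges to the corresponding integral over $\Omega^T \times \Gamma_G$. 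A density argument extends the identity to the test-function class stated in the lemma.

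For the identification $w \in H(v)$ with the precise selection \eqref{eq:w=r}, I would exploit the strong convergence $T^\ep v^\ep \to v$ in $L^2((0,T)\times\Omega\times\Gamma_G)$ from Lemma \ref{lemma:2s_strong_vep}, which (along a subsequence) yields pointwise a.e. convergence. In parallel, the strong convergence of $\tilde u^\ep$ in $L^2(0,T;H^s(\Omega))$ for $s\in(1/2,1)$ combined with the trace inequality from \cite{anna}, Lemma 4.3, upgrades to strong convergence of the traces, hence (up to subsequence) pointwise a.e. convergence of $T^\ep u^\ep$ on $(0,T)\times\Omega\times\Gamma_G$. One then argues pointwise in cases: on the set $\{v>0\}$ eventually $T^\ep v^\ep > v/2 > 0$ so $T^\ep w^\ep = 1$, yielding $w=1$; the set $\{v<0\}$ has measure zero because $v^\ep \geq 0$ by Theorem \ref{th:estim1}; and on $\{v=0\}$ one further splits according to $u>u^*$, $u<u^*$, or $u=u^*$, and uses \eqref{eq:PH:w=r} together with continuity of $r$ to conclude $T^\ep w^\ep \to \min(r(u),1)$ in each subcase. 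Uniqueness of the weak two-scale limit then forces $w$ to coincide a.e. with this pointwise limit, which is exactly \eqref{eq:w=r}.

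The main obstacle is precisely this last identification on $\{v=0\}$: the Heaviside graph is multi-valued and discontinuous there, so the weak two-scale convergence of $w^\ep$ alone cannot pin down $w$. The trick is to turn weak convergence into pointwise convergence for the data that drive the nonlinearity, namely $T^\ep v^\ep$ (via Lemma \ref{lemma:2s_strong_vep}) and the trace of $u^\ep$ (via the improved regularity from the extension lemmas and the trace estimate \eqref{eq:PH:r2s}); everything else is a case analysis matching the structure of \eqref{eq:PH:w=r}.
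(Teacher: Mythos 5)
Your proposal follows essentially the same route as the paper: pass to the limit in \eqref{eq:PH:w}$_2$ using the two-scale convergences of $\partial_t v^\ep$, $w^\ep$ and $r(u^\ep)$, then identify $w$ by upgrading the strong $L^2$ convergence of $T^\ep v^\ep$ (Lemma \ref{lemma:2s_strong_vep}) and of the traces of $u^\ep$ to pointwise a.e.\ convergence and running the case analysis $v>0$, $v=0$ with the subcases $u>u^*$, $u<u^*$, $u=u^*$, concluding by uniqueness of the weak limit. This matches the paper's argument step for step (the paper likewise discards $v<0$ by nonnegativity of $v^\ep$), so the proposal is correct and essentially identical in approach.
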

\begin{remark}
We make an important remark here. Since $u(x,t)$ is independent of the micro-variable $y$, and in view of the initial condition $v_I \in H^1(\Omega)$, we obtain that $v = v(t,x), \; w=w(t,x)$. This independence of $v$ and $w$ from $y$ implies
that the integration over $\Omega \times \Gamma_G$ reduces to integrating over $\Omega$ with the multiplicative factor $|\Gamma_G|$.
\end{remark}
With the above Lemma providing us the limit equations for \eqref{eq:PH:w}$_{2,3}$, we proceed to complete the proof of Theorem \ref{th:PH:main}.
\begin{proof}{
\textbf{Proof of Theorem \ref{th:PH:main}}\\[0.5em]
\noindent
Now we pass to the limit in \eqref{eq:PH:w}$_1$ to obtain the limiting equation and the cell problem. The low regularity of $\partial_t u^\ep$ requires us to  obtain the limiting equations via smooth test functions and using density
arguments. Accordingly, for all $\phi \in C_0^\infty(0,T;H^1(\Omega))$, using partial integration for the time derivative term, the weak formulation \eqref{eq:PH:w}$_1$ gives,
\begin{align} \label{eq:PH:passing_limit}
  -(u^\varepsilon, \chi^\ep \partial_t  \phi)_{\Omega^T} + D (\nabla u^\varepsilon , \chi^\ep \nabla \phi)_{\Omega^T} - (\textbf{q}^\ep \chi^\ep u^\varepsilon, \nabla \phi )_{\Omega^T} = -\varepsilon (\partial_t v^\varepsilon, \phi)_{\get},
 \end{align}
where $\chi^\ep$ is the characteristic function for $\Omega^\ep$. Choose for the test function $\phi(t,x) = \phi_0(t,x)+\ep \phi_1(t,x,\frac{x}{\ep})$ with $\phi_0 \in C_0^\infty(0,T; C_0^\infty(\Omega)) $ and $\phi_1 \in C_0^\infty(\Omega^T;C^{\infty}(Z))$. This gives,
\begin{align*}
\begin{array}{lll}
-\displaystyle \int\limits_{\Omega^T}  u^\ep \chi(\frac{x}{\ep})\left (\partial_t \phi_0(t,x)+\ep \partial_t \phi_1(t,x,\frac{x}{\ep})\right )\\
+D \displaystyle \int\limits_{\Omega^T} \nabla_x u^\ep(t,x) \cdot \chi(\frac{x}{\varepsilon}) \left ( \nabla_x \phi_0(t,x)+\ep \nabla_x \phi_1(t,x,\frac{x}{\ep}) + \nabla_y \phi_1(t,x,\frac{x}{\ep})\right )  \\
+\ep  \displaystyle \int\limits_0^T \int\limits_{\Gamma_G^\ep} \left ( \partial_t v^\ep,   \phi_0(t,x)+\phi_1(t,x,\frac{x}{\ep})  \right )= 0.
\end{array}
\end{align*}
With $\ep \searrow 0$ and using Lemma \ref{th:convergence} and Lemma \ref{lemma:PH:vw2scale}, we obtain
\begin{align*}
- |Y| \displaystyle \int\limits_{\Omega^T}  u \partial_t \phi_0 + D \int\limits_0^T \int\limits_{\Omega \times Y}\left ( \nabla_x u (t,x) + \nabla_y u_{ 1} (t,x,y) \right ) \left ( \partial_x \phi_0(t,x)+\nabla_y \phi_1 (t,x,y) \right ) +\\
\displaystyle \int\limits_0^T \int\limits_{\Omega} \textbf{q} u \nabla \phi_0 + |\Gamma_G| \int\limits_0^T \int\limits_{\Omega } \left ( r(u)-w\right ) \phi_0 = 0.
\end{align*}
Here, to pass to the limit in term containing $\textbf{q}^\ep$, we have used the strong convergence of $\textbf{q}^\ep$ to $\textbf{q}$ as proved in \cite{jaeger-mikelic}. \\
Next, setting $\phi_0 \equiv 0$ we obtain
\begin{align*}
D \displaystyle \int\limits_0^T \int\limits_{\Omega \times Y} \left ( \nabla_x u (t,x) + \nabla_y u_{1} \right ) \cdot \nabla_y \phi_1 (t,x,y) = 0, \text{ for all } \phi_1 \in C_0^\infty(\Omega^T;C^{\infty}(Z)),
\end{align*}
which is a weak form for the cell problem. Further,
\begin{align*} D \displaystyle \int\limits_0^T \int\limits_{\Omega \times Y} \left ( \nabla_x u(t,x)+\nabla_y u_{1} \right ) \nabla_x \phi_0 = D \int\limits_0^T \int\limits_\Omega S \nabla_x \phi_0 \nabla_x u,   \end{align*}
where \begin{align*}(S)_{i,j} = |Y|\delta_{ij}+\displaystyle \int\limits_Y \partial_{y_j} \xi_i; \qquad -\triangle \xi_i = 0 \; \text{in} \; Y, \quad \nabla \cdot \xi_i = \textbf {e}_i \cdot \boldsymbol{\nu} \; \text{on} \; \partial Y. \end{align*}
Now using Lemma \ref{th:convergence}, $\partial_t u \in L^2(0,T;H^{-1}(\Omega))$ and hence, it is justified to perform another partial integration to obtain $(\partial_t u,\phi)$ in the limiting equations. A usual density argument allows us to retrieve the limiting equations for all test functions $\phi \in L^2(0,T;H_0^1(\Omega))$.
Collecting the above results in combination with Lemma \ref{lemma:PH:vw2scale}, we conclude that $(u,v,w)$ is a weak solution as introduced in Definition \ref{def:PH:weak_upscaled}. This completes the proof of Theorem  \ref{th:PH:main}.
}\end{proof}

\section{Uniqueness of the macroscopic model}
\begin{theorem}
Problem \eqref{eq:PH:chem_homog}-\eqref{eq:PH:uic} has a unique solution.
\end{theorem}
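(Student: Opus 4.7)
The plan is to mimic the $L^1$-contraction argument that gave uniqueness for the microscopic problem (Theorem \ref{th:l1}), now applied at the macroscopic level. Let $(u_i,v_i,w_i)$, $i=1,2$, be two weak solutions in the sense of Definition \ref{def:PH:weak_upscaled} sharing the initial data $(u_I,v_I)$, and set $U:=u_1-u_2$, $V:=v_1-v_2$, $R:=r(u_1)-r(u_2)$, $W:=w_1-w_2$. Substituting the $v$-equation into the $u$-equation in \eqref{eq:PH:m} and subtracting yields, for arbitrary test pairs $(\phi,\theta)\in L^2(0,T;H^1_{0,\Gamma_D}(\Omega))\times L^2(\Omega^T)$,
\begin{align*}
(\partial_t U,\phi)_{\Omega^T}+D(S\nabla U,\nabla\phi)_{\Omega^T}-(\mathbf{q}U,\nabla\phi)_{\Omega^T}&=-(R-W,\phi)_{\Omega^T},\\
(\partial_t V,\theta)_{\Omega^T}&=(R-W,\theta)_{\Omega^T}.
\end{align*}

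I would next choose $\phi=\mathcal{S}_\delta(U)$ and $\theta=\mathcal{S}_\delta(V)$, where $\mathcal{S}_\delta$ and $\mathcal{T}_\delta$ are the regularized sign and absolute value introduced in the proof of Lemma \ref{lemma:regularity}, integrate over $(0,t)$, and add the two identities. The time-derivative contributions give $\int_\Omega\mathcal{T}_\delta(U(t))+\mathcal{T}_\delta(V(t))$ minus the analogous quantity at $t=0$ (which is zero by the common initial data); as $\delta\searrow0$ this converges to $\|U(t)\|_{L^1(\Omega)}+\|V(t)\|_{L^1(\Omega)}$. The diffusive term is $D\int_0^t\int_\Omega \mathcal{S}_\delta'(U)\,(S\nabla U)\cdot\nabla U\ge 0$ because $\mathcal{S}_\delta'\ge 0$ and $S$ is symmetric positive definite (the latter following from the cell problems $(P^C_i)$). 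For the convective term I rewrite it as $\int_0^t\int_\Omega\mathbf{q}\cdot\nabla\mathcal{T}_\delta(U)$ and integrate by parts, using $\nabla\cdot\mathbf{q}=0$, $U=0$ on $\Gamma_D$ (hence $\mathcal{T}_\delta(U)=0$ there) and $\mathbf{q}\cdot\boldsymbol{\nu}=0$ on $\Gamma_N$ (the natural no-flux condition inherited from the homogenized Darcy problem), so this term vanishes.

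All that remains is the reaction contribution $\int_0^t\int_\Omega(R-W)\bigl(\mathcal{S}_\delta(V)-\mathcal{S}_\delta(U)\bigr)$, which is exactly where the monotonicity of the graph is used. The function $f(u,v):=r(u)-w(u,v)$, with $w$ given by \eqref{eq:w=r}, is nondecreasing in $u$ (by (A.2), since $r$ is nondecreasing on $\mathbb{R}$) and nonincreasing in $v$ (since $w$ is nondecreasing in $v$), a property already exploited in \cite{tycho3} and in Lemma \ref{lemma:regularity}. A case analysis on the signs of $U$ and $V$ at a given $(t,x)$ shows that, in the limit $\delta\searrow 0$, $(R-W)\bigl(\mathcal{S}_\delta(U)-\mathcal{S}_\delta(V)\bigr)\ge 0$ almost everywhere: the two equal-sign cases give a zero limit, while in the mixed-sign cases the signs of $R-W$ and of $\mathcal{S}_\delta(U)-\mathcal{S}_\delta(V)$ agree by the monotonicity of $f$. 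Dominated convergence, justified by the uniform $L^\infty$-bounds of Theorem \ref{th:estim1} together with $|\mathcal{S}_\delta|\le 1$, lets me pass to the limit.

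Combining these facts I obtain $\|U(t)\|_{L^1(\Omega)}+\|V(t)\|_{L^1(\Omega)}\le 0$ for every $t\in(0,T]$, hence $u_1=u_2$ and $v_1=v_2$ almost everywhere; the explicit formula \eqref{eq:w=r} then forces $w_1=w_2$. The main technical point I expect to need care with is the admissibility of $\mathcal{S}_\delta(U)$ as a test function, given only $\partial_t u\in L^2(0,T;H^{-1}(\Omega))$: this is handled exactly as in the proof of Lemma \ref{lemma:regularity}, by first deriving the estimate for smooth approximations through the difference-quotient machinery and then passing to the limit by density, so the uniqueness argument transfers verbatim from the microscopic to the macroscopic setting.
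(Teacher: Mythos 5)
Your argument is essentially correct, but it takes a genuinely different route from the paper. The paper does not run the $L^1$-contraction machinery at the macroscopic level; instead it works in $L^2$: first it tests the $V$-equation with $V$ itself, drops the term $\int WV\ge 0$ by the monotonicity of the Heaviside graph, uses the Lipschitz continuity of $r$ and Gronwall to get $\|V(t)\|^2\le C\int_0^t\|U(s)\|^2ds$; then it tests the time-integrated $U$-equation with $U(t)$, absorbs the convective term by Young's inequality using the positive definiteness of $S$, and closes a Gronwall inequality for the energy $E(t)=\int_0^t\|U\|^2+\tfrac12\|S^{1/2}\int_0^t\nabla U\|^2$. Your approach, transplanting Theorem \ref{th:l1} and Lemma \ref{lemma:regularity} to the limit problem, buys more than the paper proves (an $L^1$-contraction with respect to the initial data, hence stability and not just uniqueness) and uses only the monotonicity structure of $f(u,v)=r(u)-w$ rather than a quantitative Lipschitz constant; the paper's route is shorter and avoids the regularization $\mathcal{S}_\delta$, the chain-rule justification for $\langle\partial_t U,\mathcal{S}_\delta(U)\rangle$, and any boundary condition on $\mathbf{q}$ beyond what is encoded in the weak form.

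Three points in your write-up need tightening. First, the cancellation of the convective term: after integrating by parts you are left with $\int_{\Gamma_N}(\mathbf{q}\cdot\boldsymbol{\nu})\,\mathcal{T}_\delta(U)$, and the paper never asserts $\mathbf{q}\cdot\boldsymbol{\nu}=0$ on $\Gamma_N$ for the Darcy velocity (only $\int_{\partial\Omega}\boldsymbol{\nu}\cdot\mathbf{q}_D=0$ globally); the same implicit assumption is made in the paper's own $\mathcal{I}_\delta^4$ computation, but it should be stated. Second, the $L^\infty$ bounds you invoke to dominate $R-W$ come from Theorem \ref{th:estim1}, which concerns microscopic solutions; an arbitrary weak solution in the sense of Definition \ref{def:PH:weak_upscaled} is not a priori bounded, so you must either restrict to bounded solutions or argue separately (the paper's proof shares this weakness, since $r$ is only locally Lipschitz). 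Third, the exchange term in \eqref{eq:PH:chem_homog} carries the factor $|\Gamma_G|/|Y|$, so the two reaction contributions do not cancel with your choice of test functions; you should take $\theta=\tfrac{|\Gamma_G|}{|Y|}\mathcal{S}_\delta(V)$, which changes nothing in the sign analysis. With these repairs the argument goes through.
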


\begin{proof}
Assume that there exist two solution triples $(u_1,v_1,w_1)$ and $(u_2,v_2,w_2)$.
Define:
\begin{align*}
U := u_1-u_2, \quad V := v_1-v_2, \quad W:=w_1-w_2 .
\end{align*}
Clearly, at $t=0$, we have $U(0,x) =V(0,x) = W(0,x) =0$ for all $x \in \Omega$. In terms of the differences defined above, we have the resulting equations as:
\begin{align}
\label{eq:PH:uni1} (\partial_t U,\phi)+(D S \nabla U,\nabla \phi)+(\nabla \cdot(\textbf{q} U), \phi) &= - \frac{|\Gamma_G|}{|Y|}(r(u_1)-r(u_2)-W, \phi),\\
 \label{eq:PH:uni2} (\partial_t V, \theta) &= (r(u_1)-r(u_2)-W,\theta),
\end{align}
$ \text{for all } (\phi,\theta) \in L^2(0,T,H_{0,\Gamma_D}^1(\Omega)) \times L^2(0,T;L^2(\Omega)$.

The uniqueness is proved as follows: first we use \eqref{eq:PH:uni2} to estimate $V$ in terms of $U$. This estimate can be then used in \eqref{eq:PH:uni1} to show that for all $t$, the norm of $U(t)$ is bounded by the initial condition, which is zero here. This establishes the uniqueness for $U$ and thereby for $V$ from the previous estimate. The uniqueness for $W$ follows directly from \eqref{eq:w=r}.

Taking $\theta = \chi_{(0,t)} V$ in \eqref{eq:PH:uni2} gives
\[\frac{1}{2} \|V(t,\cdot)\|^2 = \int_0^t \int_{\Omega} (r(u_1)-r(u_2)) V(s,x) dx ds - \int_0^t \int_{\Omega} W V(s,x) dx ds. \]
Since $H(\cdot)$ is monotone, the last term is positive. Using the Lipschitz continuity of $r$, this gives
\[\frac{1}{2} \|V(t,\cdot)\|^2 \leq \frac{1}{2} \int_0^t L_r^2 \|U(s,\cdot)\|^2 ds + \frac{1}{2} \int_0^t \|V(s,\cdot)\|^2 ds  .  \]
Employing Gronwall's inequality one gets
\begin{equation}
\label{eq:PH:univ} \|V(t,\cdot)\|^2 \leq C \exp(t)\int_0^t  \|U(s,\cdot)\|^2 ds \leq C(T) \int_0^t \|U(s,\cdot)\|^2 ds.
\end{equation}
Next, {letting $t \in (0, T]$ fixed arbitrary and with $\psi \in H^1_{0, \Gamma_D}(\Omega)$, combining \eqref{eq:PH:uni1} - \eqref{eq:PH:uni2} and taking $\phi = \chi_{(0,t)} \psi$ in the resulting, since $U$ and $V$ are both 0 at $t=0$ one gets}
$$
(U(t),\psi)+(D S \int_0^t \nabla U(s) ds,\nabla \psi)  + \frac{|\Gamma_G|}{|Y|}(V(t), \psi) =
- (\textbf{q} \int_0^t \nabla U(s) ds, \psi) .
$$
{Here we have used the fact that $\textbf{q}$ is divergence free and does not depend on time. Now we choose }
$\psi(x) = U(t,x)$  to obtain
\begin{align*}
\begin{array}{l}
 \|U(t,\cdot)\|^2 + D \left (S \int_0^t \nabla U(s,\cdot) ds, \nabla U(t,\cdot)\right)+\frac{|\Gamma_G|}{|Y|}\left (V(t,\cdot), U(t,\cdot) \right) \\[0.3em]
\qquad \qquad  %= \int_\Omega \textbf{q} U \left (\int_0^t \nabla U(s,\cdot) ds \right) dx
\leq  - (S^{1/2} \int_0^t \nabla U(s) ds, S^{-1/2} \textbf{q} U(t,x)) \\[0.3em]
\qquad \qquad \leq   \frac \mu 2 \left \| S^{1/2}\int_0^t \nabla U(s,\cdot)\right\|^2  + \frac{ 2 M_q^2}{ \mu \alpha_S} \|U(t,\cdot)\|^2 , 
\end{array}
\end{align*}
where $\mu > 0$ is any positive constant. In the above, we have used that $S$ is symmetric positive definite, and hence, there exists $\alpha_S >0$ such that
%\begin{align*}
$(S\boldsymbol{\xi}, \boldsymbol{\xi}) > \alpha_S (\boldsymbol{\xi}, \boldsymbol{\xi})$
%\end{align*}
 for any $\boldsymbol{\xi} \in \mathbb{R}^3$.

%From this one obtains
%\begin{align*}
%\begin{array}{rcl}
%\frac{1}{2}\|U(t,\cdot)\|^2 + \left (S \int_0^t \nabla U(s,\cdot) ds, \nabla U(t,\cdot)\right) \leq  M_q^2 \left \| \int_0^t \nabla U(s,\cdot)\right\|^2 + C \|V(t,\cdot)\|^2 .
%\end{array}
%\end{align*}
From \eqref{eq:PH:univ} and choosing $\mu$ properly we have 
\begin{align*}%\label{eq:beforeE}
\begin{array}{rcl}
\|U(t,\cdot)\|^2 + \left (S \int_0^t \nabla U(s,\cdot) ds, \nabla U(t,\cdot)\right) \leq C \left ( \int_0^t\|U(z,\cdot)\|^2 dz +  \left \| S^{1/2} \int_0^t \nabla U(s,\cdot)\right\|^2 \right).
\end{array}
\end{align*}
%Let $\lambda_{\rm{max}}$ be  the maximum eigen value of symmetric positive definite matrix $S$.
With
\[ E(t) :=  \int_0^t \|U(s,\cdot)\|^2 ds +\frac{1}{2}  \left \| S^{1/2} \int_0^t \nabla U(s,\cdot) ds \right\|^2,  \]
%\eqref{eq:beforeE} 
the above becomes 
\[ E^\prime(t) \leq C \ E(t) .\]
Clearly, $E(0) =0$ and $E(t) \geq 0$ for all $t$, which immediately gives $E (t) = 0$ for all $t$. This ensures that $ U (t) =  0 $  and, by \eqref{eq:PH:univ}, $V (t) = 0$. This concludes the proof of uniqueness.
\end{proof}

%\textcolor{red}{THE [14] IS INCOMPLETE!!!!!!!!!!}

\section*{Acknowledgement}
The work of K. Kumar was supported by the Technology Foundation STW through Project 07796.  This support is gratefully acknowledged. The authors are members of the International Research Training Group NUPUS funded by the German Research Foundation DFG (GRK 1398) and by the Netherlands Organisation for Scientific Research NWO (DN 81-754). The authors would like to thank Profs. W. J{\" a}ger (Heidelberg) and A. Mikeli{\' c} (Lyon) for their suggestions and advices.

%\clearpage
\bibliographystyle{plain}
\bibliography{All}

\end{document}